\documentclass{article}

\usepackage[utf8]{inputenc}
\usepackage{amsmath,amssymb,amsthm}
\usepackage{hyperref}

\hyphenation{spa-ces Coh-FT}

\DeclareMathOperator{\Aut}{Aut}
\DeclareMathOperator{\Hom}{Hom}
\DeclareMathOperator{\Id}{Id}
\DeclareMathOperator{\ev}{ev}
\DeclareMathOperator{\Contr}{Contr}
\DeclareMathOperator{\diag}{diag}

\newcommand{\Mbar}{\overline{M}}
\newcommand{\CC}{\mathbb C}
\newcommand{\QQ}{\mathbb Q}
\newcommand{\ZZ}{\mathbb Z}
\newcommand{\PP}{\mathbb P}
\newcommand{\cR}{\mathcal R}
\newcommand{\tH}{\widetilde H}
\newcommand{\strata}{\mathcal S}
\newcommand{\vir}{\mathrm{vir}}

\newtheorem{thm}{Theorem}
\newtheorem{proposition}{Proposition}
\theoremstyle{definition}
\newtheorem{defi}{Definition}
\newtheorem{ex}{Example}
\theoremstyle{remark}
\newtheorem{remark}{Remark}

\begin{document}

\title{Relations on $\Mbar_{g,n}$ via equivariant Gromov-Witten theory
  of $\PP^1$}
\author{Felix Janda}
\maketitle
\abstract{
  We give a proof of Pixton's generalized Faber--Zagier relations in
  the tautological Chow ring of $\Mbar_{g,n}$.
  The strategy is very similar to the work of
  Pandharipande--Pixton--Zvonkine, who have given a proof of the same
  result in cohomology.
  The main tool is the Givental--Teleman classification of semisimple
  cohomological field theories.
  While in general only known in cohomology, the Givental--Teleman
  classification for the equivariant Gromov--Witten theory of the
  projective line is also valid in the Chow ring, as can be shown via
  virtual localization.
  We obtain the relations just from the latter theory.
}

\section{Introduction}

The study of the Chow ring of the moduli space $M_g$ of genus $g$
algebraic curves was initiated by Mumford in \cite{Mu83}.
For this he introduced tautological classes, which reflect the
geometry of the objects parametrized by the moduli space.
The tautological ring $R^*(M_g)$ is the ring generated by tautological
classes and the definitions were later extended to the Deligne--Mumford
compactification $\Mbar_{g, n}$ of stable curves of genus $g$ with $n$
markings, where $R^*(\Mbar_{g, n})$ is compactly defined \cite{FaPa05}
as the smallest system
\begin{equation*}
  R^*(\Mbar_{g, n}) \subseteq A^*(\Mbar_{g, n})
\end{equation*}
of subrings compatible with push-forward under the tautological maps,
that is the maps obtained from forgetting marked points or gluing
curves along markings.
See the recent survey article \cite{Pa16P} for a discussion of the
tautological ring and the topics surrounding this paper.

There is an explicit set of generators of the ring $R^*(\Mbar_{g, n})$
of the form
\begin{equation}
  \label{eq:tautgen}
  \xi_{\Gamma*}\left(\prod_{v \in \Gamma} P_v\right),
\end{equation}
which we briefly describe now.
$\Gamma$ is a dual graph describing a stratum of $\Mbar_{g, n}$ by the
topological type of its generic elements; that is the vertices $v$ of
$\Gamma$ correspond to irreducible components and are labeled by a
genus $g(v)$, edges correspond to nodes and there are numbered legs
corresponding to the markings.
Each edge is thought of as glued from two half-edges and the legs also
count as half-edges.
Corresponding to $\Gamma$, there is a gluing map
\begin{equation*}
  \xi_\Gamma\colon \prod_{v \in \Gamma} \Mbar_{g(v), n(v)} \to \Mbar_{g, n},
\end{equation*}
finite of degree $|\Aut(\Gamma)|$, where $n(v)$ is the number of
half-edges at vertex $v$.
The $P_v$ in \eqref{eq:tautgen} are arbitrarily chosen monomials in
the first Chern classes $\psi_1, \dotsc, \psi_{n(v)}$ of the cotangent
line bundles at the markings and the $\kappa$-classes
\begin{equation*}
  \kappa_i = \pi_*(\psi_{n(v) + 1}^{i + 1}),
\end{equation*}
where $\pi$ is the forgetful map
\begin{equation*}
  \pi: \Mbar_{g(v), n(v) + 1} \to \Mbar_{g(v), n(v)}.
\end{equation*}
See \cite[Appendix~A]{GrPa03} for a review of these generators of the
tautological ring.
There, it is shown how multiplication, push-forward and pull-back
under the tautological maps can be defined on the level of generators.
In \cite{Pi12P}, the strata algebra $\strata_{g, n}$ is defined as the
formal algebra generated by formal classes of the form
\eqref{eq:tautgen}.
The discussion of \cite[Appendix~A]{GrPa03} defines the ring structure
on $\strata_{g, n}$ and lifts of the tautological maps under the
natural forgetful homomorphism
$q\colon \strata_{g, n} \to R^*(\Mbar_{g, n})$.

While there is an explicit set of generators, the set of relations
between the generators is not known despite much study.
The conjectures of Faber \cite{Fa99, FaPa00b} would give a description
of the relations.
However, recently, Petersen and Tommasi \cite{PeTo14} gave
counter-examples to the Gorenstein conjecture, which is the only one
of Faber's conjectures that had remained unproven.
On the other hand, Pixton's set \cite{Pi12P} of generalized
Faber--Zagier relations gives a well-tested conjectural description
for the set of tautological relations.

We now give a brief description of Pixton's relations.
As for the original relations of Faber and Zagier in $R^*(M_g)$, the
main input to the formulation of the relations of Pixton are the
hypergeometric series
\begin{multline}
  \label{eq:AB}
  A(z) = \sum_{i = 0}^\infty \frac{(6i)!}{(3i)!(2i)!} \frac{(-z)^i}{1728^i} = 1 - \frac 5{144} z \pm\dotsb, \\
  B(z) = \sum_{i = 0}^\infty \frac{(6i)!}{(3i)!(2i)!} \frac{1 + 6i}{1 - 6i} \frac{(-z)^i}{1728^i} = 1 + \frac 7{144} z \mp\dotsb.
\end{multline}
The functions $A$ and $B$ have appeared in many different ways in the
study of the moduli space of curves and are strongly related to the
asymptotic expansion of the Airy function.
See \cite{BJP15P} for a review of some of these occurrences.

Given a collection $(a_1, \dotsc, a_n) \in \{0, 1\}^n$, Pixton
constructs from these series certain elements
\begin{equation}
  \label{eq:pixrel}
  \sum_\Gamma \frac 1{|\Aut(\Gamma)|} \frac 1{2^{h_1(\Gamma)}} \xi_{\Gamma*} \left(\left[\prod_v \kappa_v \prod_i B_i \prod_e \Delta_e\right]_{\prod_v \zeta_v^{g(v) - 1}}\right)
\end{equation}
in the strata algebra $\strata_{g, n}$, where the sum is over all dual
graphs $\Gamma$ and the products are over all vertices $v$, markings
$i$ and edges $e$ of $\Gamma$.
At each vertex $v$ a parity condition is singled out using a variable
$\zeta_v$ satisfying $\zeta_v^2 = 1$.
The objects $\kappa_v$, $B_i$ and $\Delta_e$ in \eqref{eq:pixrel}
depend on the $\zeta$-variables and the square brackets single out a
specific coefficient in these variables.
Define
\begin{equation*}
  B_i =
  \begin{cases}
    A(\zeta_i \psi_i), & \text{if $a_i = 0$,} \\
    B(\zeta_i \psi_i), & \text{if $a_i = 1$,}
  \end{cases}
\end{equation*}
where $\zeta_i$ is $\zeta_v$ if the marking $i$ is at vertex $v$, and
define
\begin{equation*}
  \Delta_e = \frac{\zeta' + \zeta'' - A(\zeta'\psi')\zeta''B(\zeta''\psi'') - \zeta'B(\zeta'\psi')A(\zeta''\psi'')}{\psi' + \psi''},
\end{equation*}
where $\psi'$ and $\psi''$ are the cotangent line classes at both
sides of the node corresponding to $e$, and $\zeta'$ and $\zeta''$ are
the parity variables at the vertices connected by $e$.
Finally, $\kappa_v$ is the polynomial in formal $\kappa$-classes of
$\Mbar_{g(v), n(v)}$ defined by
\begin{equation*}
  \kappa_v = \sum_{k = 0}^\infty \pi_*\left(\prod_{i = n(v) + 1}^{n(v) + k} (\psi_i - \psi_i A(\zeta_v \psi_i))\right),
\end{equation*}
where $\pi$ forgets the last $k$ markings, via Faber's formula
\cite{ArCo96}.

In this paper we give a proof of the following result, which has been
conjectured by Pixton in \cite{Pi12P}.
\begin{thm}
  \label{thm:main}
  Let $(g, n)$ be in the stable range $2g - 2 + n > 0$, and let $d \ge
  0$ satisfy the condition
  \begin{equation}
    \label{eq:ineq}
    3d > g - 1 + \sum_{i = 1}^n a_i.
  \end{equation}
  Then the degree $d$ part of \eqref{eq:pixrel} is a tautological
  relation, that is, lies in the kernel of the forgetful map
  $q\colon \strata_{g, n} \to R^*(\Mbar_{g, n})$.
\end{thm}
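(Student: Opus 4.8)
\section{Proof strategy}

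The plan is to realize Pixton's cycle \eqref{eq:pixrel} as the Givental--Teleman reconstruction of the equivariant Gromov--Witten CohFT of $\PP^1$, and then to read off the relations from a degree bound on the underlying Gromov--Witten classes. I work with the $\CC^*$-action on $\PP^1$ fixing $0$ and $\infty$ and write $\lambda$ for the equivariant parameter. The stationary invariants, assembled over all curve classes $\beta$ with a Novikov variable $q$ and pushed forward along the stabilization map $\overline{M}_{g,n}(\PP^1,\beta)\to \Mbar_{g,n}$, define a cohomological field theory $\Omega$ whose state space is the localized equivariant cohomology of $\PP^1$. The first task is to carry out this construction in the Chow ring $A^*(\Mbar_{g,n})$ rather than merely in cohomology: virtual localization on the spaces of stable maps expresses every $\Omega_{g,n}$ as a sum of pushforwards of tautological classes, and I will check that the Givental--Teleman reconstruction for semisimple CohFTs holds verbatim at the Chow level for $\Omega$. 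Semisimplicity holds after inverting $\lambda$, the idempotents being the two torus fixed points.

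Granting the Chow-level classification, the reconstruction writes $\Omega_{g,n}$ as a sum over stable graphs $\Gamma$, with a vertex contribution built from the translated topological part (producing $\kappa$-classes via Faber's formula), a leg contribution given by the action of the $R$-matrix $R(z)$ on the insertions, and an edge contribution of the form $\bigl(\Id - R(z')\otimes R(z'')\bigr)$ paired through the Poincaré metric and divided by $z'+z''$. This is precisely the shape of Pixton's formula: the factors $\kappa_v$, $B_i$ and $\Delta_e$ in \eqref{eq:pixrel} are the vertex, leg and edge contributions respectively. The second task is the explicit computation of $R(z)$ and of the topological field theory of $\PP^1$. The two fixed points give rise to two idempotents differing by the sign of a square root of $\lambda$; after the appropriate normalization this square root becomes the formal variable $\zeta_v$ with $\zeta_v^2=1$, and the parity extraction $[\,\cdot\,]_{\prod_v \zeta_v^{g(v)-1}}$ records the genus-dependent power of the equivariant Euler class carried by each vertex. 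Solving the quantum differential equation of $\PP^1$ order by order in $z$ and taking the stationary-phase asymptotics at the two critical points --- the same asymptotics governing the Airy function --- identifies the entries of $R(z)$ with the hypergeometric series $A(z)$ and $B(z)$ of \eqref{eq:AB}; the two cases $a_i\in\{0,1\}$ correspond to the two natural insertions at a marking, explaining the definition of $B_i$.

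With the reconstruction matched to Pixton's cycle term by term, the relations come from a vanishing statement for the Gromov--Witten classes. For a fixed curve class the equivariant class $\Omega_{g,n}$ is a \emph{polynomial} in $\lambda$ with coefficients in $A^*(\Mbar_{g,n})$, of bounded degree dictated by the virtual dimension of the space of stable maps, so a Chow-degree $d$ term is forced to be paired with a nonnegative power of $\lambda$. The specialization of parameters that produces $A$ and $B$ is exactly the $r=3$ point, where this polynomiality translates into the vanishing of the degree $d$ part of $\Omega_{g,n}$ once $3d > g-1+\sum_i a_i$ --- the manifestation in the $\PP^1$ theory of the vanishing of Witten's $3$-spin class in high degree. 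Since the Givental--Teleman formula computes this same degree $d$ part as a nonzero multiple of the degree $d$ part of \eqref{eq:pixrel}, the condition \eqref{eq:ineq} forces the latter into the kernel of $q\colon \strata_{g,n}\to R^*(\Mbar_{g,n})$, which is the assertion of Theorem~\ref{thm:main}.

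The principal obstacle I anticipate is the first step: upgrading the Givental--Teleman classification from cohomology to the Chow ring. In cohomology one invokes Teleman's theorem directly, but in Chow there is no such general classification, so the reconstruction must be established by hand. The key is that virtual localization expresses the relevant classes through Hodge and $\psi$-classes on products of the $\Mbar_{g(v),n(v)}$, and that the resulting localization-graph sum can be matched, contribution by contribution and after inverting $\lambda$, with the abstract Givental--Teleman graph sum. Controlling the passage between the localized equivariant Chow ring, where semisimplicity and $R(z)$ live, and the honest ring $A^*(\Mbar_{g,n})$, where the final polynomiality is extracted, is the delicate point: the degree bound yields a relation only because the spurious negative powers of $\lambda$ cancel in the non-equivariant limit.
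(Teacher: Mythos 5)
Your proposal follows essentially the same route as the paper: the equivariant Gromov--Witten CohFT of $\PP^1$, a Chow-valued Givental--Teleman reconstruction established by virtual localization (Givental's computation, recalled in the appendix), an $R$-matrix computed by stationary phase asymptotics whose entries specialize to the series $A$ and $B$, a term-by-term match of the vertex, leg and edge contributions with \eqref{eq:pixrel}, and relations extracted from the contrast between the polynomiality of the CohFT and the poles appearing in the reconstruction, with the virtual-dimension count giving exactly the bound \eqref{eq:ineq}. One bookkeeping caveat to carry through carefully: in the paper the idempotents and the relevant poles involve $\sqrt\phi$ with $\phi = \lambda^2/4 + q$ rather than a square root of $\lambda$, and the specialization reaching the $3$-spin point is not the non-equivariant limit $\lambda \to 0$ but the opposite limit $\lambda^{-1} \to 0$, taken after rescaling a Chow class of degree $i$ by $\lambda^{-2i}$ (so that the second argument $y = 4\phi/\lambda^2$ of the $R$-matrix entries tends to $0$ rather than $\infty$, where the expansion would diverge).
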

Pixton furthermore conjectures that the smallest ideal in
$\strata_{g, n}$ including the relations of Theorem~\ref{thm:main} and
stable under push-forward under the tautological maps coincides with
the set of all tautological relations.

In \cite{PPZ15}, Theorem~\ref{thm:main} is proven on the level of
cohomology; in other words, it is proven that \eqref{eq:pixrel} is
mapped to zero under the composition of $q$ and the cycle class map.
The main ingredient of the proof of \cite{PPZ15} is the
Givental--Teleman classification of semisimple cohomological field
theories (CohFTs) applied in the example of Witten's 3-spin class.
All arguments of \cite{PPZ15} also apply to Chow rings except for the
fact that the classification cannot be used since its proof by Teleman
uses topological arguments in an essential way.

For the proof of Theorem~\ref{thm:main}, we will use the same
arguments as in \cite{PPZ15} except that we use another CohFT, namely
the equivariant Gromov--Witten theory of the projective line.
In this example, the classification is well known to be valid also in
the Chow setting by a localization calculation \cite{Gi01a} of
Givental.
The CohFT is slightly more complicated than the CohFT of Witten's
3-spin class in that it depends on one additional parameter.
We will, however, see that under a particular specialization of the
parameters, it agrees with the CohFT of Witten's 3-spin class.

The CohFT that we use can also alternatively be defined using the
moduli space of stable quotients to $\PP^1$ \cite{MOP11}.
Virtual localization for this moduli space has been used in
\cite{PaPi13P} to give the first proof of the Faber--Zagier relations
of $R^*(M_g)$.
The current work can be viewed as a simplification of the author's
first proof \cite{Ja13P} of Theorem~\ref{thm:main} generalizing the
arguments of \cite{PaPi13P}.
We prefer to use Gromov--Witten theory over stable quotients here
because the reconstruction of the CohFT in the Chow ring via
localization had already essentially been proven by Givental
\cite{Gi01a}.

Instead of $\mathbb P^1$, we could also have studied the
Gromov--Witten theory of higher-dimensional projective spaces in order
to find relations in the tautological ring.
The resulting relations become increasingly complicated with rising
dimension.
Still, in \cite{Ja15P1} it is shown that all of them can be expressed
in terms of Pixton's relations.

\subsection*{Plan of the paper}

We start by reviewing standard material on CohFTs including the
Givental--Teleman reconstruction in Section~\ref{sec:cohft}.
In Section~\ref{sec:p1} we restrict ourselves to the example of the
equivariant Gromov--Witten theory of $\PP^1$, studying first the
Frobenius algebra underlying the theory in Section~\ref{sec:p1:TQFT}
and the $R$-matrix in Section~\ref{sec:p1:R}.
In Section~\ref{sec:p1:limit}, we observe that under a particular
specialization of parameters this data coincides with the 3-spin
theory.
We use this in Section~\ref{sec:p1:rels} to conclude
Theorem~\ref{thm:main}.

In the appendix, we make explicit Givental's proof of
the reconstruction of the Gromov--Witten theory of projective spaces
and we recall his mirror symmetry description of the small theories.

\subsection*{Acknowledgments}

I thank my PhD advisor R.~Pandharipande for all his support while I
worked on this project, especially for the frequent meetings at the
beginning of this project.

I am also very grateful for comments from him and the anonymous
referee, which led to a considerable improvement of the current
text.

Special thanks are due to Y.~P.~Lee for discussions at the
conference \emph{Cohomology of the moduli space of curves} organized
by the \emph{Forschungsinstitut für Mathematik} at ETH Zürich, which
changed my way of thinking about the stable quotient relations.
I am also grateful for relevant discussions with A.~Marian,
D.~Oprea, A.~Pixton and D.~Zvonkine at various points of time.

The author was supported by the Swiss National Science Foundation
grant SNF 200021\_143274.

\section{Cohomological Field Theories}
\label{sec:cohft}

\subsection{Definitions}
\label{sec:cohft:def}

Cohomological field theories were first introduced by Kontsevich and
Manin in \cite{KoMa97} to formalize the structure of classes from
Gromov--Witten theory.
Let $V$ be an $N$-dimensional $\CC$-vector space and $\eta$ a
nonsingular bilinear form on $V$.
\begin{defi}
  A cohomological field theory (CohFT) $\Omega$ on $(V, \eta)$ (on the
  level of the Chow ring) is a system
  \begin{equation*}
    \Omega_{g, n} \in A^*(\Mbar_{g, n}) \otimes (V^*)^{\otimes n}
  \end{equation*}
  of multilinear forms with values in the Chow ring of $\Mbar_{g, n}$
  satisfying the following properties:
  \begin{description}
  \item[$S_n$-equivariance] The multilinear form $\Omega_{g, n}$ is
    invariant with respect to the $S_n$-action permuting the factors
    of $(V^*)^{\otimes n}$ and the markings of $\Mbar_{g, n}$
    simultaneously.
  \item[Gluing] The pull-back of $\Omega_{g, n}$ via the gluing map
    \begin{equation*}
      \Mbar_{g_1, n_1 + 1} \times \Mbar_{g_2, n_2 + 1} \to \Mbar_{g, n}
    \end{equation*}
    is given by the direct product of $\Omega_{g_1, n_2 + 1}$ and
    $\Omega_{g_2, n_2 + 1}$ with the bivector $\eta^{-1}$ inserted at
    the two points glued together.
    Similarly for the gluing map
    $\Mbar_{g - 1, n + 2} \to \Mbar_{g, n}$ the pull-back of
    $\Omega_{g, n}$ is given by $\Omega_{g - 1, n + 2}$ with
    $\eta^{-1}$ inserted at the two points glued together.
  \item[Unit] There is a special element $\mathbf 1 \in V$ called the
    \emph{unit} such that
    \begin{equation*}
      \Omega_{g, n + 1}(v_1, \dotsc, v_n, \mathbf 1)
    \end{equation*}
    is the pull-back of $\Omega_{g, n}(v_1, \dotsc, v_n)$ under the
    forgetful map and
    \begin{equation*}
      \Omega_{0, 3}(v, w, \mathbf 1) = \eta(v, w).
    \end{equation*}
  \end{description}
\end{defi}
\begin{defi}
  The quantum product $(u, v) \mapsto uv$ on $V$ with unit $\mathbf 1$
  is defined by the condition
  \begin{equation}
    \label{eq:defquantumproduct}
    \eta(uv, w) = \Omega_{0, 3} (u, v, w).
  \end{equation}
\end{defi}
\begin{defi}
  A CohFT is called semisimple if the algebra $V$ is semisimple,
  that is, if it admits a basis of orthogonal idempotent elements.
\end{defi}
\begin{remark}
  Many CohFTs come in families.
  The definition of a CohFT directly generalizes to the setting where
  $V$ is a module over a commutative ring $T$ by demanding
  $T$-linearity instead of $\CC$-linearity.
\end{remark}

\subsection{Examples}
\label{sec:cohft:ex}

\begin{ex}
  \label{ex:trivial}
  For each Frobenius algebra there is the \emph{trivial CohFT} (also
  called \emph{topological field theory} or TQFT) $\Omega_{g, n}$
  characterized by \eqref{eq:defquantumproduct} and the condition
  \begin{equation*}
    \Omega_{g, n} \in A^0 (\Mbar_{g, n}) \otimes (V^*)^{\otimes n}.
  \end{equation*}
  Let us record an explicit formula for the appendix:
  In the case that the Frobenius algebra is semisimple, there is a
  basis $\epsilon_i$ of orthogonal idempotents of $V$ and
  \begin{equation*}
    \tilde\epsilon_i = \frac{\epsilon_i}{\sqrt{\Delta_i}},
  \end{equation*}
  where $\Delta_i = \eta(\epsilon_i, \epsilon_i)$, is a corresponding
  orthonormal basis of \emph{normalized idempotents}.
  It is not difficult to show
  \begin{equation*}
    \Omega_{g, n}(\tilde\epsilon_{i_1}, \dotsc, \tilde\epsilon_{i_n}) =
    \begin{cases}
      \sum_j \Delta_{i_j}^{g - 1}, & \text{if } n = 0, \\
      \Delta_{i_1}^{\frac{2g - 2 + n}2}, & \text{if } i_1 = \dotsb = i_n, \\
      0, & \text{else,}
    \end{cases}
  \end{equation*}
  since, in general, $\Omega_{0, 3}$ is uniquely determined by $\eta$
  and the quantum product, and the other $\Omega_{g, n}$ are
  determined by considering the restriction to the locus of curves
  glued together from $2g - 2 + n$ rational curves with three special
  points.
\end{ex}

\begin{ex}
  \label{ex:hodge}
  The Chern polynomial $c_t(\mathbb E)$ of the Hodge bundle
  $\mathbb E$ gives a one-dimensional CohFT over the ring $\QQ[t]$.
\end{ex}

\begin{ex}
  \label{ex:GW}
  Let $X$ be a smooth, projective variety such that the cycle class
  map gives an isomorphism between Chow and cohomology rings.
  Then the Gromov--Witten theory of $X$ defines a CohFT based on the
  module $A^*(X) \otimes N$ over the Novikov ring
  $N := \CC[\![q^\beta]\!]$, which is generated by formal variables
  $q^\beta$ indexed by effective, integral curve classes $\beta$. 
  The CohFT is defined by setting, for $v_1, \dotsc, v_n \in A^*(X)$,
  \begin{equation*}
    \Omega_{g, n} (v_1, \dotsc, v_n)
    = \sum_{\beta} p_*\left(\prod_{i = 1}^n \ev_i^*(v_i) \cap [\Mbar_{g, n}(X, \beta)]^\vir\right) q^\beta,
  \end{equation*}
  where the sum ranges over effective, integral curve classes, $\ev_i$
  is the $i$th evaluation map and $p$ is the forgetful map
  $p\colon \Mbar_{g, n}(X, \beta) \to \Mbar_{g, n}$.
  The gluing property follows from the splitting axiom of virtual
  fundamental classes.
  The fundamental class of $X$ is the unit of the CohFT and the unit
  axioms follow from the identity axiom in Gromov--Witten theory.

  For a torus action on $X$, this example can be enhanced to give a
  CohFT from the $T$-equivariant Gromov--Witten theory of $X$.
  In this case, the CohFT is defined on the module
  $A_T^*(X) \otimes N$.
  The case that $X = \PP^1$ with a $\CC^*$-action will be the main
  example we will study.
  We will make the definition of the CohFT more concrete in
  Section~\ref{sec:p1:defs}.
\end{ex}

\subsection{Reconstruction}
\label{sec:cohft:reconstr}

The (upper half of the) \emph{symplectic loop group} corresponding to
a vector space $V$ with nonsingular bilinear form $\eta$ is the group
of endomorphism-valued power series $V[\![z]\!]$ such that the
\emph{symplectic condition}
\begin{equation}
  \label{eq:sympl}
  R(z) R^t(-z) = 1
\end{equation}
holds.
Here $R^t$ is the adjoint of $R$ with respect to $\eta$.
There is an action of this group on the space of all CohFTs based on a
fixed semisimple Frobenius algebra structure of $V$.
The action is often named after Givental because he introduced it on
the level of arbitrary genus Gromov--Witten potentials.

Given a CohFT $\Omega_{g, n}$ and such an endomorphism $R$, the new
CohFT $R\Omega_{g, n}$ takes the form of a sum over dual graphs
$\Gamma$
\begin{equation}
  \label{eq:reconstr}
  R\Omega_{g, n}(v_1, \dotsc, v_n)
  = \sum_\Gamma \frac 1{\Aut(\Gamma)} \xi_*\left(\prod_v \sum_{k = 0}^\infty \frac 1{k!} \pi_* \Omega_{g(v), n(v) + k}(\dots)\right),
\end{equation}
where $\xi\colon \prod_v \Mbar_{g(v), n(v)} \to \Mbar_{g, n}$ is the
gluing map of curves of topological type $\Gamma$ from their
irreducible components,
$\pi\colon \Mbar_{g(v), n(v) + k} \to \Mbar_{g(v), n(v)}$ forgets the
last $k$ markings and we still need to specify what is put into the
arguments of $\prod_v \Omega_{g(v), n(v) + k}$.
Instead of allowing only vectors in $V$ to be put into
$\Omega_{g, n}$, we will allow elements of
$V[\![\psi_1, \dotsc, \psi_n]\!]$, where $\psi_i$ acts on the
cohomology of the relevant moduli space of curves by multiplication
with the $i$th cotangent line class.
\begin{itemize}
\item Into each argument corresponding to a marking of the curve, put
  $R^{-1}(\psi)$ applied to the corresponding vector.
\item Into each pair of arguments corresponding to an edge put the
  bivector
  \begin{equation*}
    \frac{R^{-1}(\psi_1) \otimes R^{-1}(\psi_2) - \Id \otimes \Id}{-\psi_1 - \psi_2} \eta^{-1} \in V^{\otimes 2}[\![\psi_1, \psi_2]\!],
  \end{equation*}
  where one has to substitute the $\psi$-classes at each side of the
  normalization of the node for $\psi_1$ and $\psi_2$.
  By the symplectic condition this is well-defined.
\item At each of the additional arguments for each vertex put
  \begin{equation*}
    T(\psi) := \psi(\Id - R^{-1}(\psi)) \mathbf 1,
  \end{equation*}
  where $\psi$ is the cotangent line class corresponding to that
  vertex.
  Since $T(z) = O(z^2)$ the above $k$-sum is finite.
\end{itemize}

The following reconstruction result (on the level of potentials) was
first proposed by Givental \cite{Gi01b}.
\begin{thm}[\cite{Te12}]
  \label{thm:class}
  In cohomology, the $R$-matrix action is free and transitive on the
  space of semisimple CohFTs based on a given Frobenius algebra.
  In particular, for any CohFT $\Omega$ with underlying TQFT $\omega$
  there exists an $R$-matrix $R$ such that $\Omega = R\omega$.
\end{thm}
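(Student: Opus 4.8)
The plan is to fix the semisimple Frobenius algebra $(V, \eta)$ and to analyze the orbit map $R \mapsto R\omega$ from the symplectic loop group into the space of semisimple CohFTs, proving separately that it is injective (freeness) and surjective (transitivity); the displayed statement $\Omega = R\omega$ is then the special case of transitivity where the target is an arbitrary $\Omega$ and the base point is its underlying TQFT $\omega$. By semisimplicity I would first pass to the idempotent basis $\epsilon_i$, in which $\omega$ is the explicit diagonal theory recorded in Example~\ref{ex:trivial}; this reduces the bookkeeping to scalar data indexed by the idempotents and makes the reconstruction formula~\eqref{eq:reconstr} fully explicit.

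Freeness I would establish by inductively extracting the Taylor coefficients $R_k$ of $R(z) = \Id + R_1 z + R_2 z^2 + \dotsb$ from the CohFT $R\omega$. The point is that the leaf insertion $R^{-1}(\psi)$ and the edge bivector in~\eqref{eq:reconstr} contribute, at each order in the $\psi$-classes, a term whose leading deviation from $\omega$ is linear in the still-unknown $R_k$. Reading these deviations off $R\omega$ on the interior and on the lowest boundary strata of small $(g, n)$ determines $R_1$, then $R_2$, and so on, with the symplectic condition~\eqref{eq:sympl} cutting the coefficients down to the loop-group constraint. Hence $R\omega = \omega$ forces $R = \Id$, and the action is free.

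Transitivity is the substantial direction. Given an arbitrary semisimple CohFT $\Omega$ with underlying TQFT $\omega$, I would build $R$ genus by genus. The genus-$0$ restriction of $\Omega$ encodes a semisimple Frobenius manifold, and by the Dubrovin--Givental classification its ancestor potential is reconstructed from $(V, \eta)$, the quantum product, and a uniquely determined $R$-matrix. I would take this $R$, so that $R\omega$ and $\Omega$ agree in genus $0$. It then remains to show that agreement in genus $0$ propagates to all genera, that is, that the genus-$0$ data together with the gluing and unit axioms pin down a semisimple CohFT uniquely.

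This last step is where the real difficulty, and a genuinely topological input, enters. Consider the difference $\Omega - R\omega$; the gluing axiom expresses its boundary restrictions through lower-complexity data, which vanish by induction, so the difference is supported on an interior contribution in $A^*(\Mbar_{g, n})$. The hard part will be showing that no such interior class can survive, which requires controlling exactly which classes a semisimple CohFT is able to produce in the stable range. Here I would combine the grading coming from the dilaton and Euler vector fields with the Madsen--Weiss theorem (Mumford's conjecture), which identifies the stable cohomology of $\Mbar_g$ as a polynomial algebra on the $\kappa$-classes, precisely the classes already manufactured by~\eqref{eq:reconstr}; homogeneity then fixes the coefficients and forces $\Omega - R\omega = 0$. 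This reliance on Madsen--Weiss is exactly the topological ingredient with no known Chow counterpart, which is why Theorem~\ref{thm:class} is stated only in cohomology and why the remainder of the paper instead establishes the reconstruction for the equivariant Gromov--Witten theory of $\PP^1$ by localization.
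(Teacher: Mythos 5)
The first thing to say is that the paper does not prove Theorem~\ref{thm:class} at all: it is quoted from Teleman's classification \cite{Te12}, and the only internal commentary is the remark immediately following it, that Teleman's proof relies essentially on Mumford's conjecture (Madsen--Weiss), which is why the result is unavailable in Chow and why the paper instead proves reconstruction for equivariant $\PP^1$ by localization. So your proposal can only be measured against Teleman's actual argument, whose broad architecture you do identify correctly: freeness by inductively extracting the coefficients of $R(z)$ from low $(g,n)$ data (standard and fine), transitivity by first matching the genus-zero ancestor/Frobenius-manifold data via a uniquely determined $R$-matrix in the sense of Givental, and then propagating agreement to higher genus using topological input. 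Your closing observation that Madsen--Weiss is exactly the ingredient with no Chow counterpart reproduces the paper's remark.

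However, the decisive higher-genus step is asserted rather than proved, and as sketched it would fail. From ``$\Omega - R\omega$ restricts to zero on all boundary strata'' you cannot conclude vanishing: a class on $\Mbar_{g,n}$ whose restriction to every boundary divisor is zero need not be zero, and your appeal to ``the grading coming from the dilaton and Euler vector fields'' is unavailable because Teleman's theorem carries no homogeneity hypothesis --- an arbitrary semisimple CohFT has no Euler field, and this generality is precisely what the present paper needs, since the equivariant $\PP^1$ theory is taken at generic values of $\lambda$ and $q$. Moreover, Mumford's conjecture computes the stable cohomology of the mapping class group, i.e.\ of the open moduli space $M_g$ in the Harer-stable range, and is not a statement about $H^*(\Mbar_g)$, which you wrote, nor about a fixed low $(g,n)$; to use it one needs both stability with twisted coefficients (to control the $\psi$-classes at the markings) and, crucially, a mechanism transferring uniqueness from the stable range back to arbitrary fixed $(g,n)$. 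That transfer is exactly where semisimplicity enters Teleman's proof: gluing handles acts on the theory by multiplication by the invertible element $\sum_i \Delta_i \epsilon_i$ in the idempotent basis, so comparisons can be pushed into the stable range without losing information. Your sketch never invokes semisimplicity in this propagation step --- a structural red flag, since the classification is false for non-semisimple theories, so any correct proof must use semisimplicity precisely there.
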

\begin{remark}
  Teleman's proof relies heavily on topological results (Mumford's
  conjecture/Madsen-Weiss theorem) and it is therefore not known if
  the same classification result also holds in general when we work
  with Chow rings instead of cohomology.
\end{remark}
\begin{remark}
  \label{rmk:reconstrChow}
  In the appendix, we review a virtual localization computation of
  Givental, which implies that Theorem~\ref{thm:class} is true in the
  Chow ring for the equivariant Gromov--Witten theory of projective
  spaces.
  Furthermore, mirror symmetry gives an explicit description of the
  $R$-matrix.
\end{remark}
\begin{ex}
  \label{ex:mumford}
  By Mumford's Grothendieck--Riemann--Roch calculation \cite{Mu83},
  the single entry of the $R$-matrix taking the trivial
  one-dimensional CohFT to the CohFT from Example~\ref{ex:hodge} is
  given by
  \begin{equation*}
    \exp\left(\sum_{i = 1}^\infty \frac{B_{2i}}{2i(2i - 1)} (tz)^{2i - 1}\right),
  \end{equation*}
  where the $B_{2i}$ are the Bernoulli numbers, defined by
  \begin{equation*}
    \sum_{i = 0}^\infty B_i \frac{x^i}{i!} = \frac x{e^x - 1}.
  \end{equation*}
  More generally, if we consider a more general CohFT given by a
  product of Chern polynomials (in different variables) of the Hodge
  bundle, the $R$-matrix from the trivial CohFT is the product of the
  $R$-matrices of the factors.
\end{ex}

\section{Equivariant Gromov--Witten theory of $\PP^1$}
\label{sec:p1}

In this section we consider the projective line $\PP^1$ together with
a $\CC^*$-action with weights $(0, 1)$ and the tautological relations
resulting from it.

\subsection{Definition of the CohFT}
\label{sec:p1:defs}

We make the CohFT described in Example~\ref{ex:GW} more concrete in
our example.

The only effective curve classes on $\PP^1$ are nonnegative multiples
of $[\PP^1]$.
Hence the Novikov ring is the ring $\CC[\![q]\!]$, where $q$
corresponds to $[\PP^1]$.

Let $\lambda$ be the first Chern class of the dual of the one
dimensional vector space with a $\CC^*$-action of weight 1.
Then the equivariant Chow ring of $\PP^1$ is isomorphic to
\begin{equation*}
  A_{\CC^*}^*(\PP^1) \cong \QQ[\lambda, H]/H(H - \lambda);
\end{equation*}
this expresses the fact that the equivariant classes $H$ and
$H - \lambda$ of the two fixed points 0 and $\infty$ intersect
trivially.
The module $A_{\CC^*}^*(\PP^1) \otimes N$ over
$\CC[\lambda] \otimes N$ is the state space of the CohFT.

The CohFT $\Omega_{g, n}^{\PP^1}$ corresponding to the equivariant
Gromov--Witten theory of $\PP^1$ is defined by setting for
$v_1, \dotsc, v_n \in A_{\CC^*}^*(\PP^1)$,
\begin{equation}
  \label{eq:P1CohFTdef}
  \Omega_{g, n} (v_1, \dotsc, v_n)
  = \sum_{d = 0}^\infty p_*\left(\prod_{i = 1}^n \ev_i^*(v_i) \cap [\Mbar_{g, n}(\PP^1, d)]^\vir\right) q^d,
\end{equation}
where $\Mbar_{g, n}(\PP^1, d)$ is the moduli space of stable maps of
degree $d$ to $\PP^1$ and $p$ is the forgetful map to $\Mbar_{g, n}$.
The dimension of the virtual class $[\Mbar_{g, n}(\PP^1, d)]^\vir$ is
\begin{equation*}
  (3 - \dim\PP^1)(g - 1) + n + \langle dH, c_1(T_{\PP^1})\rangle = 2g - 2 + n + 2d.
\end{equation*}
This means that for any $a_1, \dotsc, a_n \in \{0, 1\}$ the
coefficient of $q^d$ in $\Omega_{g, n} (H^{a_1}, \dotsc, H^{a_n})$ has
degree
\begin{equation*}
  3g - 3 + n - (2g - 2 + n + 2d) + \sum_{i = 1}^n a_i = g - 1 - 2d + \sum_{i = 1}^n a_i.
\end{equation*}
Since this integer is negative for large enough $d$, the sum over $d$
in \eqref{eq:P1CohFTdef} is finite and defines a CohFT on the module
$A_{\CC^*}^*(\PP^1)[q]$ over the ring $\CC[\lambda, q]$.

\subsection{The underlying TQFT}
\label{sec:p1:TQFT}

The nonsingular bilinear form on $A_T^*(\PP^1)$ is given by the
equivariant Poincaré pairing.
In the basis $\{1, H\}$ it takes the form
\begin{equation*}
  \begin{pmatrix}
    0 & 1 \\
    1 & \lambda
  \end{pmatrix}.
\end{equation*}

Define
\begin{equation*}
  \tH := H - \frac\lambda 2,
\end{equation*}
so that in the basis $\{1, \tH\}$ the Poincaré pairing becomes
just
\begin{equation*}
  \begin{pmatrix}
    0 & 1 \\
    1 & 0
  \end{pmatrix}.
\end{equation*}

The equivariant quantum product of $\PP^1$ is defined using
three-pointed genus zero equivariant Gromov--Witten invariants as in
\eqref{eq:defquantumproduct}.
Explicitly, the relation defining the classical equivariant cup
product gets deformed to
\begin{equation*}
  H(H - \lambda) = q.
\end{equation*}
Equivalently, we can write
\begin{equation*}
  \tH^2 = \phi,
\end{equation*}
where we set
\begin{equation*}
  \phi = \frac{\lambda^2}4 + q.
\end{equation*}
Notice that the product is semisimple if and only if $\phi \neq 0$.

By choosing a root $\sqrt\phi$ of $\phi$ (working in an extension of
the base ring), we can easily write down idempotents
\begin{equation*}
  \frac 1{\Delta_\pm} \tH + \frac 12,
\end{equation*}
where $\Delta_\pm = \pm 2\sqrt\phi$.
By choosing further roots $\sqrt{\Delta_\pm}$, we can define
normalized idempotents such that the change of basis $\Psi$ from the
basis of normalized idempotents to the basis $\{1, \tH\}$ is given by
\begin{equation*}
  \Psi =
  \begin{pmatrix}
    \frac{\sqrt\phi}{\sqrt{\Delta_+}} & \frac{-\sqrt\phi}{\sqrt{\Delta_-}} \\
    \frac 1{\sqrt{\Delta_+}} & \frac 1{\sqrt{\Delta_-}}
  \end{pmatrix}.
\end{equation*}
For convenience, we also note that
\begin{equation*}
  \Psi^{-1} =
  \begin{pmatrix}
    \frac 1{\sqrt{\Delta_+}} & \frac{\sqrt\phi}{\sqrt{\Delta_+}} \\
    \frac 1{\sqrt{\Delta_-}} & \frac{-\sqrt\phi}{\sqrt{\Delta_-}}
  \end{pmatrix}.
\end{equation*}

By induction, we can show that the underlying TQFT $\omega_{g, n}$ of
$\Omega_{g, n}^{\PP^1}$ is explicitly given by
\begin{equation}
  \label{eq:TQFT}
  \omega_{g, n}(1^{\otimes a}, \tH^{\otimes b}) =
  \begin{cases}
    2^g \phi^{\frac{g - 1 + b}2}, & \text{if $g - 1 + b$ is even,} \\
    0, & \text{else,}
  \end{cases}
\end{equation}
where we have used a shorthand notation for the arguments of
$\omega_{g, n}$, meaning $a$ insertions of 1 and $b$ insertions of
$\tH$.
Notice that the TQFT applied to basis vectors takes values in
$\QQ[\phi]$.

\subsection{The $R$-matrix}
\label{sec:p1:R}

As reviewed in the appendix, the $R$-matrix necessary for
reconstructing the equivariant Gromov--Witten theory of $\PP^1$ can be
found by using virtual localization for a nontrivial $\CC^*$-action
and made explicit using stationary phase asymptotics of certain
oscillating integrals on the so-called mirror of $\PP^1$.
The coefficients of the $R$-matrix obtained from virtual localization
are valued in $\QQ[\lambda^\pm][\![q, z]\!]$, but the mirror
description shows that the coefficients are obtained by expanding
elements of $\QQ[\lambda^\pm, \phi^\pm][\![z]\!]$ around $q = 0$.
In any case, the $R$-matrix is defined over a nontrivial ring
extension of $\QQ[\lambda, q]$, and it is, at first sight, not clear
how this is compatible with the fact that the CohFT is defined over
$\CC[\lambda, q]$.
As we will see in Section~\ref{sec:p1:rels}, such a contrast implies
the existence of tautological relations.

As reviewed in Section~\ref{sec:mirror} of the appendix (see also
\cite{BJP15P}), the $R$-matrix can be computed from the stationary
phase asymptotics of the oscillating integrals
\begin{equation}
  \label{eq:osc}
  \frac 1{\sqrt{-2\pi z}} \int\limits_{\Gamma_\pm} e^{F(x)/z} \left(e^x - \frac\lambda 2\right)^{1 - i} \mathrm dx,
\end{equation}
where $\Gamma_\pm$ are suitable real, one-dimensional cycles, and
\begin{equation*}
  F(x) = e^x + qe^{-x} - \lambda x
\end{equation*}
is the superpotential.
Let $u_{\pm}$ be the critical values at the critical points
\begin{equation*}
  e^x = \frac \lambda 2 \pm \sqrt\phi.
\end{equation*}
of $F$.
The $\Gamma_\pm$ are the corresponding Lefschetz thimbles, each going
through exactly one of the critical points.

We start with the stationary phase asymptotics for the integral
\eqref{eq:osc} when $i = 1$.
Shifting each critical point to the origin, rescaling $x$, expanding
the exponential and using
\begin{equation*}
  \frac 1{\sqrt{2\pi}} \int\limits_{-\infty}^\infty e^{-\frac{x^2}2} x^i\mathrm dx =
  \begin{cases}
    (i - 1)!!, & \text{if $i$ is even,} \\
    0, & \text{else,}
  \end{cases}
\end{equation*}
gives
\begin{align*}
  &\frac 1{\sqrt{-2\pi z}} \int\limits_{\Gamma_\pm} e^{\left(u_\pm + \Delta_\pm \frac{x^2}2 + \lambda \frac{x^3}6 + \Delta_\pm \frac{x^4}{24} + \lambda \frac{x^5}{120} + \dotsb\right)/z} \mathrm dx \\
  \asymp& \frac{e^{\frac{u_\pm}z}}{\sqrt{\Delta_\pm} \sqrt{2\pi}} \int\limits_{-\infty}^\infty e^{-\frac{x^2}2} e^{-\frac{x^3}6 \frac\lambda{\Delta_\pm^{3/2}} \sqrt{-z} - \frac{x^4}{24} \frac 1{\Delta_\pm} (-z) - \frac{x^5}{120} \frac\lambda{\Delta_\pm^{5/2}} (-z)^{3/2} - \dotsb} \mathrm dx \\
  \asymp& \frac{e^{\frac{u_\pm}z}}{\sqrt{\Delta_\pm}} F_0\left(z\frac{\lambda^2}{\Delta_\pm^3}, \frac{\Delta_\pm^2}{\lambda^2}\right)
\end{align*}
for some power series
\begin{equation*}
  F_0(x, y) = 1 - \left(\frac 5{24} - \frac 18 y\right)x + \left(\frac{385}{1152} - \frac{77}{192} y + \frac 9{128} y^2\right) x^2 + \dotsb \in \QQ[y][\![x]\!].
\end{equation*}

When $i = 0$, the additional factor
\begin{equation*}
  e^x - \frac\lambda 2
\end{equation*}
in \eqref{eq:osc} becomes
\begin{equation*}
  \left(\frac\lambda 2 \pm \sqrt\phi\right)e^x - \frac\lambda 2
\end{equation*}
after translation and
\begin{equation*}
  \pm\sqrt\phi \left(\left(1 + \frac\lambda{\Delta_\pm}\right)e^{x\Delta_\pm^{-1/2}\sqrt{-z}} - \frac\lambda{\Delta_\pm}\right).
\end{equation*}
after scaling.
So for $i = 0$, the asymptotic expansion is given by
\begin{equation*}
  \frac{\pm\sqrt\phi e^{\frac{u_\pm}z}}{\sqrt{\Delta_\pm}} F_1\left(z\frac{\lambda^2}{\Delta_\pm^3}, \frac{\Delta_\pm^2}{\lambda^2}\right),
\end{equation*}
where
\begin{equation*}
  F_1(x, y) = 1 + \left(\frac 7{24} - \frac 38 y\right)x - \left(\frac{455}{1152} - \frac{33}{64} y + \frac{15}{128} y^2\right) x^2 + \dotsb \in \QQ[y][\![x]\!].
\end{equation*}

In total, the $S$-matrix from the basis of normalized idempotents to
the basis $\{1, \tH\}$ is given by
\begin{equation*}
  \begin{pmatrix}
    \frac{\sqrt\phi}{\sqrt{\Delta_+}} F_1\left(z\frac{\lambda^2}{\Delta_+^3}, \frac{\Delta_+^2}{\lambda^2}\right) & \frac{-\sqrt\phi}{\sqrt{\Delta_-}} F_1\left(z\frac{\lambda^2}{\Delta_-^3}, \frac{\Delta_-^2}{\lambda^2}\right) \\
    \frac 1{\sqrt{\Delta_+}} F_0\left(z\frac{\lambda^2}{\Delta_+^3}, \frac{\Delta_+^2}{\lambda^2}\right) & \frac 1{\sqrt{\Delta_-}} F_0\left(z\frac{\lambda^2}{\Delta_-^3}, \frac{\Delta_-^2}{\lambda^2}\right)
  \end{pmatrix}
  \begin{pmatrix}
    e^{\frac{u_+}z} & 0 \\
    0 & e^{\frac{u_-}z}
  \end{pmatrix}.
\end{equation*}
Multiplying the first factor from the right by $\Psi^{-1}$ gives the
$R$-matrix written in the basis $\{1, \tH\}$:
\begin{equation}
  \label{eq:R}
  R(z) =
  \begin{pmatrix}
    F_1^e & \sqrt\phi F_1^o \\
    \frac 1{\sqrt\phi} F_0^o & F_0^e
  \end{pmatrix}
  \left(z\frac{\lambda^2}{\Delta_+^3}, \frac{\Delta_+^2}{\lambda^2}\right)
  =
  \begin{pmatrix}
    F_1^e & \sqrt\phi F_1^o \\
    \frac 1{\sqrt\phi} F_0^o & F_0^e
  \end{pmatrix}
  \left(z\frac{\lambda^2}{8 \phi^{3/2}}, 4\frac\phi{\lambda^2}\right)
  ,
\end{equation}
where
\begin{equation*}
  F_i^e(x, y) = \frac{F_i(x, y) + F_i(-x, y)}2, \qquad F_i^o(x, y) = \frac{F_i(x, y) - F_i(-x, y)}2.
\end{equation*}
The symplectic condition \eqref{eq:sympl} implies
\begin{equation}
  \label{eq:sympl2}
  R^{-1}(z) =
  \begin{pmatrix}
    0 & 1 \\
    1 & 0
  \end{pmatrix}
  R^t(-z)
  \begin{pmatrix}
    0 & 1 \\
    1 & 0
  \end{pmatrix}.
\end{equation}
So we can easily compute
\begin{equation*}
  R^{-1}(z) =
  \begin{pmatrix}
    F_0^e & \sqrt\phi F_1^o \\
    \frac 1{\sqrt\phi} F_0^o & F_1^e
  \end{pmatrix}
  \left(-z\frac{\lambda^2}{8 \phi^{3/2}}, 4\frac\phi{\lambda^2}\right).
\end{equation*}
\begin{remark}
  \label{rmk:ring}
  The power series $F_i^e(x, y)$ are even in $x$; the power series
  $F_i^o(x, y)$ are odd.
  In particular, the coefficients of $R(z)$ in the basis $\{1, \tH\}$
  are power series in $z$ whose coefficients are Laurent polynomials
  in $\lambda$ and $\phi$.
\end{remark}

\subsection{Limit}
\label{sec:p1:limit}

In this section, we show how, out of the CohFT
$\Omega_{g, n}^{\PP^1}$, a CohFT can be extracted that is very similar
to the theory of the shifted Witten's 3-spin class as studied in
\cite{PPZ15}.

We use the isomorphisms
\begin{equation*}
  \QQ[\lambda, q] \cong \QQ[\lambda, \phi], \qquad A_{\CC^*}^*(\PP^1) \otimes \QQ[q] \cong \QQ[\lambda, \phi]\langle 1, \tH\rangle,
\end{equation*}
defined by
\begin{equation*}
  q \mapsto \phi - \frac{\lambda^2}4, \qquad H \mapsto \tH + \frac\lambda 2,
\end{equation*}
once and for all so that the CohFT $\Omega_{g, n}^{\PP^1}$ is defined
over the ring $\QQ[\lambda, \phi]$ and based on the module
$\QQ[\lambda, \phi]\langle 1, \tH\rangle$.
(It follows from the discussion of Section~\ref{sec:p1:R} that we can
use $\QQ$-coefficients (instead of $\CC$-coefficients), but it is not
of much importance which field we use.)

We can define a new CohFT $\widetilde \Omega_{g, n}^{\PP^1}$ on the
module $\QQ[\lambda^{\pm}, \phi]\langle 1, \tH\rangle$ over the
ring $\QQ[\lambda^{\pm 1}, \phi]$ as the composition
\begin{equation*}
  \widetilde \Omega_{g, n}^{\PP^1} = \varphi \circ \Omega_{g, n}^{\PP^1},
\end{equation*}
where
\begin{equation*}
  \varphi\colon A^*(\Mbar_{g, n}) \otimes \QQ[\lambda, \phi] \to A^*(\Mbar_{g, n}) \otimes \QQ[\lambda^\pm, \phi]
\end{equation*}
is the ring homomorphism induced by
\begin{equation*}
  C \mapsto \left(-\frac 4{3\lambda^2}\right)^i C
\end{equation*}
for any $C \in A^i(\Mbar_{g, n})$.
By definition, $\varphi$ leaves elements of
$A^0(\Mbar_{g, n}) \otimes \QQ[\lambda, \phi]$ invariant and therefore
the TQFTs of $\Omega_{g, n}^{\PP^1}$ and
$\widetilde\Omega_{g, n}^{\PP^1}$ are the same (up to the change of
base ring). 
Since the formal variable $z$ of an $R$-matrix measures degree in the
Chow ring, we have
\begin{equation}
  \label{eq:tildeR}
  \widetilde R(z) = R\left(-\frac 43 z \lambda^{-2}\right)
  =
  \begin{pmatrix}
    F_1^e & \sqrt\phi F_1^o \\
    \frac 1{\sqrt\phi} F_0^o & F_0^e
  \end{pmatrix}
  \left(-\frac z{6 \phi^{3/2}}, 4\frac\phi{\lambda^2}\right),
\end{equation}
where $R(z)$ and $\widetilde R(z)$ denote the $R$-matrices of
$\Omega_{g, n}^{\PP^1}$ and $\widetilde \Omega_{g, n}^{\PP^1}$,
respectively.

Notice that no positive power of $\lambda$ appears in the TQFT
\eqref{eq:TQFT} and in $\widetilde R$.
So by the reconstruction, which also holds for
$\widetilde \Omega_{g, n}^{\PP^1}$ in the Chow ring, the base ring for
the CohFT $\widetilde \Omega_{g, n}^{\PP^1}$ can be taken to be the
intersection
\begin{equation*}
  \QQ[\lambda^{\pm}, \phi] \cap \QQ[\lambda^{-1}, \phi^{\pm}] = \QQ[\lambda^{-1}, \phi].
\end{equation*}
We define a new CohFT $\Omega_{g, n}$ on the module
$\QQ[\phi]\langle 1, \tH\rangle$ over the base ring $\QQ[\phi]$ by
setting $\lambda^{-1} = 0$ in $\widetilde \Omega_{g, n}^{\PP^1}$.
Reconstruction in the Chow ring for $\widetilde \Omega_{g, n}^{\PP^1}$
implies reconstruction in the Chow ring for $\Omega_{g, n}$.

The CohFT $\Omega_{g, n}$ has the same underlying TQFT \eqref{eq:TQFT}
as before but the simpler $R$-matrix
\begin{equation*}
  \begin{pmatrix}
    F_1^e & \sqrt\phi F_1^o \\
    \frac 1{\sqrt\phi} F_0^o & F_0^e
  \end{pmatrix}
  \left(-\frac z{6\phi^{3/2}}, 0\right)
  =
  \begin{pmatrix}
    F_1^e & -\sqrt\phi F_1^o \\
    -\frac 1{\sqrt\phi} F_0^o & F_0^e
  \end{pmatrix}
  \left(\frac z{6\phi^{3/2}}, 0\right).
\end{equation*}
From their definitions, we can make the $F_i(z, 0)$ explicit:
\begin{align*}
  F_0(z, 0) &\asymp \frac 1{\sqrt{2\pi}} \int\limits_{-\infty}^\infty e^{-\frac{x^2}2} e^{-\frac{x^3}6 \sqrt{-z}} \mathrm dx
  \asymp \sum_{i = 0}^\infty \frac{(6i - 1)!!}{(2i)!} \frac{(-z)^i}{36^i}
  &= A(6z) \\
  F_1(z, 0) &\asymp \frac 1{\sqrt{2\pi}} \int\limits_{-\infty}^\infty e^{-\frac{x^2}2} e^{-\frac{x^3}6 \sqrt{-z}} (1 + x\sqrt{-z}) \mathrm dx
  \asymp \dotsb
  &= B(6z)
\end{align*}
Here, the Faber--Zagier $A$- and $B$-series \eqref{eq:AB} appear.
Using the symplectic condition \eqref{eq:sympl2}, we can also compute
the inverse of the $R$-matrix to be equal to
\begin{equation}
  \label{eq:Rinv}
  \begin{pmatrix}
    A^e & \sqrt\phi B^o \\
    \frac 1{\sqrt\phi} A^o & B^e
  \end{pmatrix}
  \left(\frac z{\phi^{3/2}}\right),
\end{equation}
where $A^e$ and $B^e$ are the even parts of $A$ and $B$, respectively,
and similarly $A^o$ and $B^o$ denote the odd parts.
The coefficients of the inverse $R$-matrix are elements of
$\QQ[\phi^\pm][\![q]\!]$.

In cohomology, the CohFT of the shifted Witten's 3-spin class as
described in \cite{PPZ15} coincides with $\Omega_{g, n}$ if we
identify the parameters $\phi$ here and in \cite{PPZ15} as well as the
bases $\{1, \tH\}$ and $\{\partial_x, \partial_y\}$.
This is because the TQFT \eqref{eq:TQFT} (compare to
\cite[Lemma~3.3]{PPZ15}) and inverse $R$-matrix \eqref{eq:Rinv}
(compare to \cite[Equation~(14)]{PPZ15}) completely agree.\footnote{Note that the TQFT and inverse $R$-matrix in \cite{PPZ15}
  are written in the rescaled basis
  $\{\hat\partial_x, \hat\partial_y\}$, where
  $\hat\partial_x = \phi^{1/4}\partial_x$ and
  $\hat\partial_y = \phi^{-1/4}\partial_y$.} 
It is not clear whether $\Omega_{g, n}$ agrees with Witten's 3-spin
class in the Chow ring since it is not known whether reconstruction in
the Chow ring holds for the 3-spin theory.

\subsection{Relations}
\label{sec:p1:rels}

Recall from the previous section that there exists a CohFT
$\Omega_{g, n}$ defined on the $\QQ[\phi]$-module
$\QQ[\phi]\langle 1, \tH\rangle$, which coincides with the CohFT of
shifted Witten's 3-spin class in cohomology, but for which
reconstruction holds in the Chow ring.
Let us fix a choice of $a_1, \dotsc, a_n \in \{0, 1\}$ for this
section.

The fact that the inverse $R$-matrix \eqref{eq:Rinv} has poles in
$\phi$ implies the existence of tautological relations: The
reconstruction formula \eqref{eq:reconstr} for the CohFT
$\Omega_{g, n}$ defines an element
\begin{equation*}
  \cR_{g, n}(\tH^{a_1}, \dotsc, \tH^{a_n}) \in \strata_{g, n} \otimes \QQ[\phi^\pm],
\end{equation*}
projecting to $\Omega_{g, n}(\tH^{a_1}, \dotsc, \tH^{a_n})$ under
$q\colon \strata_{g, n} \to R^*(\Mbar_{g, n})$.
So all coefficients of $\phi^c$ in $\cR_{g, n}$ for $c < 0$ have to
project to zero and thus are tautological relations.
As we will see in this section these relations turn out to be
nontrivial and are in fact enough to prove Theorem~\ref{thm:main}.

\begin{table}
  \label{tab:degs}
  \centering
  \begin{tabular}{c|c|c|c|c|}
    & $\lambda$ & $q$ & $\phi$ & $C \in A^i(\Mbar_{g, n})$ \\ \hline
    $\Omega_{g, n}^{\PP^1}(\tH^{a_1}, \dotsc, \tH^{a_n})$ & 1 & 2 & 2 & i \\
    $\widetilde\Omega_{g, n}^{\PP^1}(\tH^{a_1}, \dotsc, \tH^{a_n})$ & 1 & 2 & 2 & 3i \\
    $\Omega_{g, n}(\tH^{a_1}, \dotsc, \tH^{a_n})$ & & & 2 & 3i \\
  \end{tabular}
  \caption{Degree rules}
\end{table}
We can characterize the part of $\cR_{g, n}$ with poles in $\phi$ in a
different way: By the dimension analysis from
Section~\ref{sec:p1:defs}, the coefficient of $\lambda^j q^d$ in
$\Omega_{g, n}^{\PP^1}(\tH^{a_1}, \dotsc, \tH^{a_n})$ is an element of
$A^k(\Mbar_{g, n})$ for
\begin{equation*}
  k = g - 1 - 2d - j + \sum_{i = 1}^n a_i.
\end{equation*}
Hence, $\Omega_{g, n}^{\PP^1}(\tH^{a_1}, \dotsc, \tH^{a_n})$ is
homogeneous of degree
\begin{equation}
  \label{eq:relvdim}
  g - 1 + \sum_{i = 1}^n a_i
\end{equation}
if we assign degrees according to the first row of
Table~\ref{tab:degs}.

Recall from Section~\ref{sec:p1:limit} that
$\widetilde\Omega_{g, n}^{\PP^1} = \varphi \circ \Omega_{g,
  n}^{\PP^1}$, where $\varphi$ replaces each $C \in A^i(\Mbar_{g, n})$
by a constant multiple of $C\lambda^{-2i}$.
Therefore
$\widetilde\Omega_{g, n}^{\PP^1}(\tH^{a_1}, \dotsc, \tH^{a_n})$ is
also homogeneous of degree \eqref{eq:relvdim}, if we assign degrees as
in the second row of Table~\ref{tab:degs}.
Finally, $\Omega_{g, n}(\tH^{a_1}, \dotsc, \tH^{a_n})$ is homogeneous
of the same degree \eqref{eq:relvdim} if we assign degrees as in the
third row of Table~\ref{tab:degs}.

The conclusion is that the part of
\begin{equation}
  \label{eq:CohFTrel}
  \cR_{g, n}(\tH^{a_1}, \dotsc, \tH^{a_n})
\end{equation}
with poles in $\phi$ corresponds to the part whose degree in the Chow
ring is greater than
\begin{equation}
  \label{eq:degwitten}
  \frac{g - 1 + \sum_{i = 1}^n a_i}3.
\end{equation}
This is exactly the degree of Witten's 3-spin class.
In fact, as discussed before, in cohomology the CohFT $\Omega_{g, n}$
is the same as the CohFT of Witten's 3-spin class and, as in
\cite{PPZ15}, the considered relations are those of degree greater
than the degree \eqref{eq:degwitten} of Witten's class.
So the relations we consider are the same elements of the strata
algebra as in \cite{PPZ15}.
Starting from the fact that the Givental--Teleman classification of
semisimple CohFTs holds in the Chow ring for the CohFT
$\Omega_{g, n}^{\PP^1}$ we have shown that the relations considered in
\cite{PPZ15} hold not only in cohomology but also in the Chow ring.

In \cite{PPZ15}, the resulting relations are formally simplified to
give the relations of Theorem~\ref{thm:main}.
Thus, we can conclude the proof of Theorem~\ref{thm:main} here.
For the convenience of the reader we provide a summary of the
arguments below.

\subsection{Computing the relations}

Following \cite{PPZ15}, we indicate how the relations from the
vanishing of \eqref{eq:CohFTrel} in degree greater than
\eqref{eq:degwitten} imply the relations of Theorem~\ref{thm:main}.
For simplicity, let us set $\phi = 1$. 
We expand the reconstruction formula \eqref{eq:reconstr} defining
\eqref{eq:CohFTrel} using the explicit inverse $R$-matrix
\eqref{eq:Rinv} and the formula \eqref{eq:TQFT} for the TQFT.
The terms which give a relation are those of degree greater than
\eqref{eq:degwitten}, which explains inequality \eqref{eq:ineq}.
For a dual graph $\Gamma$, the powers of $2$ in \eqref{eq:TQFT} yield
\begin{equation*}
  \prod_v 2^{g(v)} = 2^g 2^{-h^1(\Gamma)}.
\end{equation*}
The edge term in the reconstruction formula is
\begin{multline*}
  \frac{R^{-1}(\psi') \otimes R^{-1}(\psi'') - \Id \otimes \Id}{-\psi' - \psi''} \eta^{-1} \\
  = \frac{1 \otimes \tH + \tH \otimes 1 - (A^e(\psi') 1 + A^o(\psi') \tH) \otimes (B^o(\psi'') 1 + B^e(\psi'') \tH)}{\psi' + \psi''} \\
  - \frac{(B^o(\psi') 1 + B^e(\psi') \tH) \otimes (A^e(\psi'') 1 + A^o(\psi'') \tH)}{\psi' + \psi''} \\
  = [\Delta_e]_{\zeta'^0\zeta''^0} (1 \otimes 1) + [\Delta_e]_{\zeta'^1\zeta''^0} (\tH \otimes 1) + [\Delta_e]_{\zeta'^0\zeta''^1} (1 \otimes \tH) + [\Delta_e]_{\zeta'^1\zeta''^1} (\tH \otimes \tH),
\end{multline*}
where $[\Delta_e]_{\zeta'^i\zeta''^j}$ is the coefficient of
$\zeta'^i\zeta''^j$ in the edge series $\Delta_e$ from the
introduction.
Similarly, in the reconstruction formula, the contribution of the
$i$th marking and of an extra marking can be expressed in terms of
$B_i$ and $\kappa_v$, respectively.
As necessitated by the case distinction in \eqref{eq:TQFT}, we can
keep track of the parity of the number of $\tH$-insertions into the
TQFT at vertex $v$ using an additional variable $\zeta_v$.
By careful inspection, we see that \eqref{eq:CohFTrel} at $\phi = 1$
coincides with \eqref{eq:pixrel} times $2^g$.
We have thus arrived at the statement of Theorem~\ref{thm:main}.

\appendix

\section{Virtual localization for projective spaces (after Givental)}
\label{sec:givenloc}

In this appendix, we recall Givental's localization calculation
\cite{Gi01a} (see also \cite{LePa04P} for a more leisurely treatment),
which proves that the CohFT from equivariant $\PP^m$ can be obtained
from the trivial theory via a specific $R$-matrix action.
We first recall localization in the space of stable maps to $\PP^m$,
in Section~\ref{sec:locdefs}.
Next, in Section~\ref{sec:locgen}, we group the localization
contributions according to the dual graph of the source curve.
We collect identities following from the string and dilaton equation
in Section~\ref{sec:SD}, before applying them to finish the
computation in Section~\ref{sec:loctofrob}.
In Section~\ref{sec:Rchar}, we collect properties of the $R$-matrix.
Finally, in Section~\ref{sec:mirror}, we give Givental's mirror
description \cite{Gi01b} of the $R$-matrix.

\subsection{Localization in the space of stable maps}
\label{sec:locdefs}

Let $T = (\CC^*)^{m + 1}$ act diagonally on $\PP^m$.
The equivariant Chow rings of a point $\mathrm{pt}$ and $\PP^m$ are
given by
\begin{align*}
  A^*_{T} (\mathrm{pt})
  \cong& \QQ[\lambda_0, \dotsc, \lambda_m], \\
  A^*_{T} (\PP^m)
  \cong& \QQ[H, \lambda_0, \dotsc, \lambda_m]/\prod_{i = 0}^m (H - \lambda_i),
\end{align*}
where $H$ is a lift of the hyperplane class.
Furthermore, let $\eta$ be the equivariant Poincaré pairing.

There are $m + 1$ fixed points $p_0, \dotsc, p_m$ for the $T$-action
on $\PP^m$.
The characters of the action of $T$ on the tangent space
$T_{p_i}\PP^m$ are given by $\lambda_i - \lambda_j$ for $j \not= i$.
Hence the corresponding equivariant Euler class $e_i$ is
\begin{equation*}
  e_i = \prod_{j \neq i} (\lambda_i - \lambda_j).
\end{equation*}
The equivariant class $e_i$ also serves as the inverse of the norms
of the equivariant (classical) idempotents
\begin{equation*}
  \phi_i = e_i^{-1} \prod_{j \neq i} (H - \lambda_j).
\end{equation*}

The virtual localization formula \cite{GrPa99} implies that the
virtual fundamental class can be split into a sum
\begin{align*}
  [\Mbar_{g, n}(\PP^m, d)]^\vir_T
  = \sum_X \iota_{X, *} \frac{[X]^\vir_T}{e_T(N^\vir_{X, T})}
\end{align*}
of contributions of fixed loci $X$.
Here, $N^\vir_{X, T}$ denotes the virtual normal bundle of $X$ in
$\Mbar_{g, n}(\PP^m, d)$ and $e_T$ the equivariant Euler class.
Because of the denominator, the fixed-point contributions are defined
only after localizing by the elements $\lambda_0, \dotsc, \lambda_m$.
By studying the $\CC^*$-action on deformations and obstructions of
stable maps, $e_T(N^\vir_{X, T})$ can be computed explicitly.

The fixed loci can be labeled by certain decorated graphs.
These consist of
\begin{itemize}
\item a graph $(V, E)$,
\item an assignment $\zeta\colon V \to \{p_0, \dotsc, p_m\}$ of fixed points,
\item a genus assignment $g\colon V \to \ZZ_{\ge 0}$,
\item a degree assignment $d\colon E \to \ZZ_{> 0}$,
\item an assignment $p\colon \{1, \dotsc n\} \to V$ of marked points,
\end{itemize}
such that the graph is connected and contains no self-edges, two
adjacent vertices are not assigned to the same fixed point and we have
\begin{equation*}
  g = h^1(\Gamma) + \sum_{v \in V} g(v), \qquad
  d = \sum_{e \in E} d(e).
\end{equation*}
A vertex $v \in V$ is called stable if $2g(v) - 2 + n(v) > 0$, where
$n(v)$ is the number of outgoing edges at $v$.

The fixed locus corresponding to a graph is characterized by the
conditions that stable vertices $v \in V$ of the graph correspond to
contracted genus $g(v)$ components of the domain curve and that edges
$e \in E$ correspond to multiple covers of degree $d(e)$ of the torus
fixed line between two fixed points.
An unstable vertex $v$ with $(g(v), n(v)) = (0, 2)$ corresponds to
either a node connecting two multiple covers of a line or a marking at
the end of one of the multiple covers.
When $(g(v), n(v)) = (0, 2)$, the vertex $v$ corresponds to an unmarked
point at the end of a multiple cover.
Such a fixed locus is isomorphic to a product of moduli spaces of
curves
\begin{equation*}
  \prod_{v \in V} \Mbar_{g(v), n(v)}
\end{equation*}
up to a finite map.

For a fixed locus $X$ corresponding to a given graph, the Euler class
$e_T(N^\vir_{X, T})$ is a product of factors corresponding to the
geometry of the graph:
\begin{multline}
  \label{eq:normal}
  e_T(N^\vir_{X, T}) = \prod_{v\text{, stable}} \frac{e(\mathbb E^*
    \otimes T_{\PP^m, \zeta(v)})}{e_{\zeta(v)}}
  \prod_{\text{nodes}} \frac{e_{\zeta}}{-\psi_1 - \psi_2} \\
  \prod_{\substack{g(v) = 0 \\ n(v) = 1}} (-\psi_v) \prod_e
  \mathrm{Contr}_e
\end{multline}
In the first product, $\mathbb E^*$ denotes the dual of the Hodge
bundle, $T_{\PP^m, \zeta(v)}$ is the tangent space of $\PP^m$ at
$\zeta(v)$, and all bundles and Euler classes should be considered
equivariantly.
The second product is over nodes forced onto the domain curve by the
graph.
They correspond to stable vertices together with an outgoing edge, or
vertices $v$ of genus $0$ with $n(v) = 2$.
By $\psi_1$ and $\psi_2$, we denote the (equivariant) cotangent line
classes at the two sides of the node.
For example, the equivariant cotangent line class $\psi$ at a fixed
point $p_i$ on a line mapped with degree $d$ to a fixed line is more
explicitly given by
\begin{equation*}
  -\psi = \frac{\lambda_i - \lambda_j}d,
\end{equation*}
where $p_j$ is the other fixed point on the fixed line.
The explicit expressions for the last two factors of \eqref{eq:normal}
can be found in \cite{GrPa99}, but will play no role for us.
It is only important that they depend only on local data.

\subsection{General procedure}
\label{sec:locgen}

We set $W$ to be $A^*_{T} (\PP^m)$ with all equivariant parameters
localized.
For $v_1, \dotsc, v_n \in W$ the (full) CohFT $\Omega_{g, n}$ from
equivariant $\PP^m$ is defined by
\begin{multline}
  \label{eq:PmfullCohFT}
  \Omega_{g, n}^t(v_1, \dotsc, v_n) \\
  = \sum_{d, k = 0}^\infty \frac{q^d}{k!}
  \pi_* p_*\left(\prod_{i = 1}^n \ev_i^*(v_i) \prod_{i = n + 1}^{n + k}
    \ev_i^*(t) \cap [\Mbar_{g, n + k}(\PP^m, d)]^\vir\right),
\end{multline}
where
\begin{equation*}
  t = t_0 \phi_0 + \dotsb + t_m \phi_m
\end{equation*}
is a formal point on $W$, the map $\pi$ forgets the last $k$ markings
and $p$ forgets the map.
We want to calculate the push-forward via virtual localization.
In the end we will arrive at the formula of the $R$-matrix action as
described in Section~\ref{sec:cohft:reconstr} for an
endomorphism-valued power series $R_{\PP^m}(z)$.
In the following we will systematically suppress the dependence on $t$
in the notation.
In the main part of this paper, we are interested in the case where
$t$ is set to zero.

For the localization computation, we start by remarking that for each
localization graph for \eqref{eq:PmfullCohFT} there exists a dual
graph of $\Mbar_{g, n}$ corresponding to the topological type of the
stabilization under $\pi p$ of a generic source curve of that locus.
What gets contracted under the stabilization maps are trees of
rational curves.
There are three types of these unstable trees of rational curves:
\begin{enumerate}
\item \label{tree1} Those which contain one of the $n$ markings and
  are connected to a stable component
\item \label{tree2} Those which are connected to two stable components
  and contain none of the $n$ markings
\item \label{tree3} Those which are connected to one stable component
  but contain none of the $n$ markings
\end{enumerate}
Here, a stable component is a component of the source curve not
contracted by the stabilization for $\pi p$.
The first two types of trees correspond, respectively, to the preimage
under the stabilization of
\begin{enumerate}
\item one of the $n$ markings,
\item one of the nodes.
\end{enumerate}

Each type of tree gives rise to a series of localization
contributions, and we want to record it using the fact that the same
contributions already occur in genus zero.

Let $W'$ be an abstract free module over the same base ring as $W$
with a basis $w_0, \dotsc, w_m$ labeled by the fixed points of the
$T$-action on $\PP^m$.
We will later identify $W'$ with $W$ in a nontrivial way (see
Equation~\eqref{eq:WW}).
The type (\ref{tree1}) contributions are recorded by
\begin{equation*}
  \widetilde R^{-1} = \sum_i \widetilde R_i^{-1} w_i \in \Hom(W, W')[\![z]\!],
\end{equation*}
the homomorphism-valued power series such that
\begin{equation*}
  \widetilde R_i^{-1}(v) = \eta(e_i\phi_i, v) + \sum_{d, k = 0}^\infty \frac{q^d}{k!} \sum_{\Gamma \in G_{d, k, i}^1} \frac 1{\Aut(\Gamma)} \Contr_\Gamma(v)
\end{equation*}
where $G_{d, k, i}^1$ is the set of localization graphs for
$\Mbar_{0, 2 + k}(\PP^m, d)$ such that the first marking is at a
valence 2 vertex at fixed point $i$ and $\Contr_\Gamma(v)$ is the
contribution for graph $\Gamma$ for the integral
\begin{equation*}
  \int\limits_{\Mbar_{0, 2 + k}(\PP^m, d)} \frac{e_i}{-z - \psi_1} \ev_2^*(v) \prod_{l = 3}^{2 + k} \ev_l^*(t).
\end{equation*}
We define the integral in the case $(d, k) = (0, 0)$ to be zero and
will do likewise for other integrals over nonexisting moduli spaces.

The type (\ref{tree2}) contributions are recorded by the bivector
\begin{equation*}
  \widetilde V = \sum_i \widetilde V^{ij} w_i \otimes w_j \in W'^{\otimes 2} [\![z, w]\!]
\end{equation*}
which is defined by
\begin{equation*}
  \widetilde V^{ij} = \sum_{d, k = 0}^\infty \frac{q^d}{k!} \sum_{\Gamma \in G_{d, k, i, j}^2} \frac 1{\Aut(\Gamma)} \Contr_\Gamma,
\end{equation*}
where $G_{d, k, i, j}^2$ is the set of localization graphs for
$\Mbar_{0, 2 + k}(\PP^m, d)$ such that the first and second marking
are at valence 2 vertices at fixed points $i$ and $j$, respectively,
and $\Contr_\Gamma$ is the contribution for graph $\Gamma$ for the
integral
\begin{equation*}
  \int\limits_{\Mbar_{0, 2 + k}(\PP^m, d)} \frac{e_ie_j}{(-z - \psi_1)(-w - \psi_2)} \prod_{l = 3}^{2 + k} \ev_l^*(t).
\end{equation*}
Finally, the type (\ref{tree3}) contribution is a vector
\begin{equation*}
  \widetilde T = \sum_i \widetilde T_i w_i \in W'[\![z]\!]
\end{equation*}
which is defined by
\begin{equation*}
  \widetilde T_i = t + \sum_{d, k = 0}^\infty \frac{q^d}{k!} \sum_{\Gamma \in G_{d, k, i}^3} \frac 1{\Aut(\Gamma)} \Contr_\Gamma
\end{equation*}
where $G_{d, k, i}^3$ is the set of localization graphs for
$\Mbar_{0, 1 + k}(\PP^m, d)$ such that the first marking is at a
valence 2 vertex at fixed point $i$ and $\Contr_\Gamma(v)$ is the
contribution for graph $\Gamma$ for the integral
\begin{equation*}
  \int\limits_{\Mbar_{0, 1 + k}(\PP^m, d)} \frac{e_i}{-z - \psi} \prod_{l = 2}^{1 + k} \ev_l^*(t).
\end{equation*}

With these contributions we can write the CohFT already in a form
quite similar to the $R$-matrix action formula \eqref{eq:reconstr}.
Let $\omega_{g, n}$ be the $n$-form on $W'$ which vanishes if $w_i$
and $w_j$ for $i \neq j$ are inputs, which satisfies
\begin{equation*}
  \omega_{g, n}(w_i, \dotsc, w_i) = \frac{e(\mathbb E^* \otimes T_{\PP^m, p_i})}{e_i} = e_i^{g - 1} \prod_{j \neq i} c_{\lambda_j - \lambda_i}(\mathbb E)
\end{equation*}
and which for $n = 0$ is defined similarly as in
Example~\ref{ex:trivial}.
We have
\begin{equation}
  \label{eq:locresA}
  \Omega_{g, n}^t(v_1, \dotsc, v_n) = \sum_{\Gamma} \frac 1{\Aut(\Gamma)} \xi_*\left(\prod_v \sum_{k = 0}^\infty \frac 1{k!} \pi_* \omega_{g_v, n_v + k}(\dots)\right),
\end{equation}
where we put
\begin{enumerate}
\item $\widetilde R^{-1}(\psi)(v_i)$ into the argument corresponding to marking $i$,
\item one half of $\widetilde V(\psi_1, \psi_2)$ into an argument
  corresponding to a node and
\item $\widetilde T(\psi)$ into all additional arguments.
\end{enumerate}

We will still need to apply the string and dilaton equation in order
to make $\widetilde T(\psi)$ a multiple of $\psi^2$, like the
corresponding series in the reconstruction, express the Hodge classes
via Mumford's formula and then relate the series to the $R$-matrix.

\subsection{String and Dilaton Equation}
\label{sec:SD}

We want to use the string and dilaton equation to bring a series
\begin{equation}
  \label{eq:SDstart}
  \sum_{k = 0}^\infty \frac 1{k!}
  \pi_*\left( \prod_{i = 1}^n \frac 1{-x_i - \psi_i} \prod_{i =
      n + 1}^{n + k}
    Q(\psi_i)\right),
\end{equation}
where $\pi\colon \Mbar_{g, n + k} \to \Mbar_{g, n}$ is the forgetful
map and $Q = Q_0 + zQ_1 + z^2Q_2 + \dotsb$ is a formal series, into a
canonical form.

By the string equation, \eqref{eq:SDstart} is annihilated by
\begin{equation*}
  \mathcal L' = \mathcal L + \sum_{i = 1}^n \frac 1{x_i},
\end{equation*}
where $\mathcal L$ is the string operator
\begin{equation*}
  \mathcal L
  = \frac\partial{\partial Q_0} - Q_1 \frac\partial{\partial Q_0} - Q_2 \frac\partial{\partial Q_1} - Q_3 \frac\partial{\partial Q_2} - \dotsb.
\end{equation*}
Moving along the string flow for some time $-u$, that is applying
$e^{t\mathcal L'}|_{t = -u}$ to \eqref{eq:SDstart}, gives
\begin{equation*}
  \sum_{k = 0}^\infty \frac 1{k!}
  \pi_*\left( \prod_{i = 1}^n \frac{e^{-\frac u{x_i}}}{-x_i - \psi_i} \prod_{i =
      n + 1}^{n + k}
    Q'(\psi_i)\right),
\end{equation*}
for a new formal series $Q' = Q'_0 + zQ'_1 + z^2Q'_2 + \dotsb$.
In the case that
\begin{equation*}
  u
  = \sum_{k = 1}^\infty \frac 1{k!} \int\limits_{\Mbar_{0, 2 + k}} \prod_{i = 3}^{2 + k} Q(\psi_i),
\end{equation*}
which we will assume from now on, the new series $Q'$ satisfies
$Q'_0 = 0$.
This is because the string equation implies $\mathcal Lu = 1$ and
therefore applying $e^{t\mathcal L}|_{t = -u}$ to $u$ gives, on the
one hand, $u - t|_{t = -u} = 0$ and, on the other hand, the definition
of $u$ with $Q$ replaced by $Q'$, which for dimension reasons is a
nonzero multiple of $Q'_0$.

Next, by applying the dilaton equation we can remove the linear part
from the series $Q'_0$
\begin{multline}
  \label{eq:SDstable}
  \sum_{k = 0}^\infty \frac 1{k!}
  \pi_*\left( \prod_{i = 1}^n \frac 1{-x_i - \psi_i} \prod_{i =
      n + 1}^{n + k}
    Q(\psi_i)\right) \\
  = \sum_{k = 0}^\infty \frac{\Delta^{\frac{2g - 2 + n + k}2}}{k!}
  \pi_*\left( \prod_{i = 1}^n \frac{e^{-\frac u{x_i}}}{-x_i - \psi_i} \prod_{i =
      n + 1}^{n + k}
    Q''(\psi_i)\right),
\end{multline}
where $Q'' = Q' - Q'_1z$ and
\begin{equation*}
  \Delta^{\frac 12} = (1 - Q'_1)^{-1} = \sum_{k = 0}^\infty \frac 1{k!} \int\limits_{\Mbar_{0, 3 + k}} \prod_{i = 4}^{3 + k} Q(\psi_i).
\end{equation*}

We will also need identities in the degenerate cases $(g, n) = (0, 2)$
and $(g, n) = (0, 1)$.
In the first case, there is the identity
\begin{equation}
  \label{eq:sflow2}
  \frac 1{-z - w} + \sum_{k = 1}^\infty \frac 1{k!} \int\limits_{\Mbar_{0, 2 + k}} \frac 1{-z - \psi_1} \frac 1{-w - \psi_2} \prod_{i = 3}^{2 + k} Q(\psi_i)
  = \frac{e^{-u/z + -u/w}}{-z - w}.
\end{equation}
In order to see that \eqref{eq:sflow2} is true, we use that the
left-hand side is annihilated by $\mathcal L + 1/z + 1/w$ in order to
move from $Q$ to $Q'$ via the string flow and notice that there all
the integrals vanish for dimension reasons.
Similarly, there is the identity
\begin{multline}
  \label{eq:sflowT}
  1 - \frac{Q(z)}{z} - \frac 1z \sum_{k = 2}^\infty \frac 1{k!} \int\limits_{\Mbar_{0, 1 + k}} \prod_{i = 2}^{1 + k} Q(\psi_i) \\
  = e^{-u/z} \left(1 - \frac{Q'(z)}{z}\right) = e^{-u/z} \left(\Delta^{-\frac 12} - \frac{Q''(z)}{z}\right),
\end{multline}
which can be proven like the previous identity by using that the
left-hand side is annihilated by $\mathcal L + 1/z$.

We define the functions $u_i$ and $(\Delta_i/e_i)^{1/2}$ for
$i \in \{0, \dotsc, m\}$ to be the $u$ and $\Delta^{1/2}$ at the
points $Q = \widetilde T_i$ from the previous section.

\subsection{Computation of the localization series}
\label{sec:loctofrob}

We apply \eqref{eq:SDstable} to \eqref{eq:locresA} and obtain
\begin{equation}
  \label{eq:locresB}
  \Omega_{g, n}^t(v_1, \dotsc, v_n) = \sum_{\Gamma} \frac 1{\Aut(\Gamma)} \xi_*\left(\prod_v \sum_{k = 0}^\infty \frac 1{k!} \pi_* \omega'_{g_v, n_v + k}(\dots)\right),
\end{equation}
where we put
\begin{enumerate}
\item $R^{-1}(\psi)(v_i)$ into the argument corresponding to marking $i$,
\item one half of $V(\psi_1, \psi_2)$ into an argument
  corresponding to a node and
\item $T(\psi)$ into all additional arguments.
\end{enumerate}
Here $R^{-1}$, $V$ and $T$ are defined exactly as $\widetilde R^{-1}$,
$\widetilde V$ and $\widetilde T$ but with the replacement
\begin{equation*}
  \frac{e_i}{-x - \psi} \rightsquigarrow \frac{e_i e^{-\frac{u_i}x}}{-x - \psi}
\end{equation*}
made at the factors we put at the ends of the trees.
The form $\omega'_{g, n}$ satisfies
\begin{equation*}
  \omega'_{g, n}(w_i, \dotsc, w_i) = \Delta_i^{\frac{2g - 2 + n}2} e_i^{-\frac n2} \prod_{j \neq i} c_{\lambda_j - \lambda_i}(\mathbb E).
\end{equation*}

We now want to compute $R^{-1}$, $V$ and $T$ in terms of the
homomorphism-valued series $S^{-1}(z) \in \Hom(W, W')[\![z]\!]$ with
$w_i$-component
\begin{multline*}
  S_i^{-1}(z) = \langle \frac{e_i\phi_i}{-z - \psi}, -\rangle \\
  := \eta(e_i\phi_i, -) + \sum_{d, k = 0}^\infty \frac{q^d}{k!} \int\limits_{\Mbar_{0, 2 + k}(\PP^m, d)} \frac{\ev_1^*(e_i\phi_i)}{-z - \psi_1} \ev_2^*(-) \prod_{j = 3}^{k + 2} \ev_j^*(t).
\end{multline*}

We start by computing $S^{-1}$ via localization.
Using that in genus zero, the Hodge bundle is trivial, we find that at
the vertex with the first marking we need to compute integrals exactly
as in \eqref{eq:sflow2}, where the first summand stands for the case
that the vertex is unstable and the second summand stands for the case
that the vertex is stable with $k$ trees of type \ref{tree3} and one
tree of type \ref{tree1} attached to it.
Applying \eqref{eq:sflow2}, we obtain
\begin{equation}
  \label{eq:SvR}
  S_i^{-1}(z) = e^{-\frac{u_i}z} R_i^{-1}(z).
\end{equation}

Using the shorthand notation
\begin{multline*}
  \left\langle \frac{v_1}{x_1 - \psi}, \frac{v_2}{x_2 - \psi}, \frac{v_3}{x_3 - \psi}\right\rangle \\
  := \sum_{d, k = 0}^\infty \frac{q^d}{k!}
  \int\limits_{\Mbar_{0, 3 + k}(\PP^m, d)}
  \frac{\ev_1^* v_1}{x_1 - \psi_1} \frac{\ev_2^* v_2}{x_2 - \psi_2} \frac{\ev_3^* v_3}{x_3 - \psi_3} \prod_{i = 4}^{3 + k} \ev_i^* (t)
\end{multline*}
for genus zero Gromov--Witten invariants and applying the string
equation, we can also write
\begin{equation*}
  S_i^{-1}(z) = -\frac 1z \langle \frac{e_i\phi_i}{-z - \psi}, \mathbf 1, -\rangle.
\end{equation*}
By the identity axiom and Witten-Dijgraaf-Verlinde-Verlinde equation,
we have
\begin{multline*}
  \langle \frac{e_i\phi_i}{-z - \psi}, \frac{e_j\phi_j}{-w - \psi}, \mathbf 1\rangle = \langle \frac{e_i\phi_i}{-z - \psi}, \frac{e_j\phi_j}{-w - \psi}, \bullet\rangle \langle \bullet, \mathbf 1, \mathbf 1\rangle \\
  = \langle \frac{e_i\phi_i}{-z - \psi}, \mathbf 1, \bullet\rangle \langle \bullet, \mathbf 1, \frac{e_j\phi_j}{-w - \psi}\rangle,
\end{multline*}
where in the latter two expressions the $\bullet$ should be filled with
$\eta^{-1}$, so that
\begin{multline}
  \label{eq:SetaS}
  \frac{S_i^{-1}(z) \otimes S_j^{-1}(w)}{-z - w} \eta^{-1} \\
  = \frac{\eta(e_i\phi_i, e_j\phi_j)}{-z - w} + \sum_{d, k = 0}^\infty \frac{q^d}{k!} \int\limits_{\Mbar_{0, 2 + k}(\PP^m, d)} \frac{\ev_1^*(e_i\phi_i)}{-z - \psi_1} \frac{\ev_2^*(e_j\phi_j)}{-w - \psi_2} \prod_{l = 3}^{k + 2} \ev_l^*(t).
\end{multline}
We compute the right-hand side via localization.
There are two cases in the localization depending on whether the first
and second markings are at the same vertex or at different ones.
In the first case, we apply \eqref{eq:sflow2} at this common vertex
and obtain the total contribution
\begin{equation*}
  \frac{e_i\delta_{ij} e^{-\frac{u_i}z - \frac{u_j}w}}{-z - w},
\end{equation*}
which includes the unstable summand.
In the other case, we apply \eqref{eq:sflow2} at the two vertices and
obtain
\begin{equation*}
  e^{-\frac{u_i}z - \frac{u_j}w} V^{ij}(z, w).
\end{equation*}
So all together
\begin{equation*}
  V^{ij}(z, w) = \frac{(R_i^{-1}(z) \otimes R_j^{-1}(w)) \eta^{-1} - e_i\delta_{ij}}{-z - w}.
\end{equation*}

Finally, we express $T$ in terms of $R$ by computing
\begin{equation*}
  S_i^{-1}(z)\mathbf 1 = 1 - \frac{t_i}z - \frac 1z\sum_{d, k = 0}^\infty \frac{q^d}{k!} \int\limits_{\Mbar_{0, 1 + k}(\PP^m, d)} \frac{\ev_1^*(e_i\phi_i)}{-z - \psi_1} \prod_{j = 2}^{1 + k} \ev_j^*(t)
\end{equation*}
via localization.
Applying \eqref{eq:sflowT} at the first marking, we find that
\begin{equation*}
  S_i^{-1}(z)\mathbf 1 = e^{-\frac{u_i}z} \left(\Delta_i^{-\frac 12} e_i^{\frac 12} - \frac{T_i(z)}z\right).
\end{equation*}
So
\begin{equation*}
  T(z) = z\left(\sum_i \Delta_i^{-\frac 12} e_i^{\frac 12}w_i - R^{-1}(z) \mathbf 1\right).
\end{equation*}

By \eqref{eq:locresB}, the underlying TQFT of $\Omega^t_{g, 0}$ is
given by
\begin{equation*}
  \sum_i \Delta_i^{g - 1}.
\end{equation*}
This implies that the $\Delta_i$ need to be the inverses of the norms
of the idempotents for the quantum product of equivariant $\PP^m$
(because these are pairwise different).
Since $\widetilde T$ vanishes at $(t, q) = 0$, we have that $\Delta_i$
agrees with $e_i$ at $(t, q) = 0$.
Therefore, we can identify $W'$ with $W$ via
\begin{equation}
  \label{eq:WW}
  W' \to W, \qquad w_i \mapsto \sqrt{\Delta_i/e_i}\ \epsilon_i,
\end{equation}
where $\epsilon_i$ is the idempotent element which coincides with
$\phi_i$ at $(t, q) = 0$.
The previous results then say exactly that $\Omega^t$ is obtained from
the CohFT $\omega'$ by the action of the $R$-matrix $R$.
In turn, Example~\ref{ex:mumford} implies that $\omega'$ is obtained
from the TQFT by the action of an $R$-matrix $R_{\text{Mumford}}$
which is diagonal in the basis of idempotents and has entries
\begin{equation*}
  \exp\left(\sum_{i = 1}^\infty \frac{B_{2i}}{2i(2i - 1)} \sum_{j \neq i} \left(\frac z{\lambda_j - \lambda_i}\right)^{2i - 1}\right).
\end{equation*}
We define the endomorphism $R_{\PP^m}$ to be the composition
\begin{equation*}
  R_{\PP^m} = R \cdot R_{\text{Mumford}}.
\end{equation*}
Therefore, the $R$-matrix action of $R_{\PP^m}$ takes the TQFT to
$\Omega^t$.

\subsection{Characterization of $R$-matrix}
\label{sec:Rchar}

We study the $R$-matrix $R_{\PP^m}$ defined in the previous section as
the composition of $R$ and $R_{\text{Mumford}}$.

As we have seen, there is a power series $S^{-1}(z)$ in $z^{-1}$
strongly related to $R$.
By considering \eqref{eq:SetaS} as $w + z \to 0$, we see that
$S^{-1}(z)$ satisfies the symplectic condition; that is, its inverse
$S(z)$ is the adjoint of $S^{-1}(-z)$ with respect to $\eta$.
Using this, we can say, more explicitly, that the evaluation of $S(z)$
at the $i$th normalized idempotent is the vector
\begin{equation*}
  \left\langle \frac{\sqrt{e_i}\phi_i}{z - \psi}, \eta^{-1}\right\rangle.
\end{equation*}
In terms of $S(z)$, the relation between $R$ and $S$, when written in
the basis of normalized idempotents, is
\begin{equation}
  \label{eq:SvR2}
  R(z) = S(z) e^{-\diag(u_0, \dotsc, u_m)/z}.
\end{equation}

The series $S(z)$ satisfies the \emph{quantum differential equation}
\begin{equation}
  \label{eq:QDE}
  z \frac\partial{\partial t_\mu} S(z) = \phi_\mu \star S(z)
\end{equation}
for any $\mu$.
This follows from the genus zero topological recursion relations
\begin{equation*}
  z\left\langle \phi_\mu, \frac{\sqrt{e_i}\phi_i}{z - \psi}, \eta^{-1}\right\rangle = \langle \phi_\mu, \eta^{-1}, \bullet\rangle \left\langle \bullet, \frac{\sqrt{e_i}\phi_i}{z - \psi}\right\rangle,
\end{equation*}
where, as previously, $\eta^{-1}$ should be inserted at the two
$\bullet$.

The divisor axiom in Gromov--Witten theory determines the
$q$-dependence of $S$.
We have
\begin{multline*}
  \left\langle \frac{\sqrt{e_i}\phi_i}{z - \psi}, \eta^{-1}, H\right\rangle
  = \left\langle \frac{\sqrt{e_i}H \phi_i}{z(z - \psi)}, \eta^{-1}\right\rangle + d \left\langle \frac{\sqrt{e_i}\phi_i}{z - \psi}, \eta^{-1}\right\rangle \\
  = \left(\frac{\lambda_i}z + d\right)\left\langle \frac{\sqrt{e_i}\phi_i}{z - \psi}, \eta^{-1}\right\rangle
\end{multline*}
and hence
\begin{equation}
  \label{eq:qdep}
  H \star S_i(z) = DS_i(z) + \lambda_i S_i(z),
\end{equation}
where
\begin{equation*}
  D = zq\frac\partial{\partial q}.
\end{equation*}

The analog of equation~\eqref{eq:SvR2} defines a new series
$S_{\PP^1}$.
It satisfies the same symplectic condition, quantum differential
equation and $q$-dependence \eqref{eq:qdep} since $R_{\text{Mumford}}$
is diagonal and does not depend on the $t_i$.

Let us consider the classical limit $(t, q) \to 0$.
In this case, $\Delta_i = e_i$, $w_i = \phi_i$ and
$\widetilde R = R = \Id$.
Therefore,
\begin{equation}
  \label{eq:limit}
  R_{\PP^1}|_{(t, q) = 0} = R_{\text{Mumford}} = \exp(\diag(b_0, \dotsc, b_m)),
\end{equation}
where
\begin{equation*}
  b_j = \sum_{i = 1}^\infty \frac{B_{2i}}{2i(2i - 1)} \sum_{l \neq j} \left(\frac z{\lambda_l - \lambda_j}\right)^{2i - 1}.
\end{equation*}

\begin{proposition}
  The series $R_{\PP^1}(z)$ written in the basis of normalized
  idempotents is the unique matrix-valued power series in $z$
  depending on parameters $\lambda_0, \dotsc, \lambda_m$, $q$ and
  $t_0, \dotsc, t_m$ such that there exist functions $u_0, \dotsc,
  u_m$ independent of $z$ and a matrix-valued power series $S_{\PP^1}$
  in $z^{-1}$ such that
  \begin{enumerate}
  \item the series $R_{\PP^1}$ and $S_{\PP^1}$ are related by
    \eqref{eq:SvR2},
  \item $S_{\PP^1}$ satisfies the quantum differential equation
    \eqref{eq:QDE},
  \item $S_{\PP^1}$ satisfies \eqref{eq:qdep} and
  \item the classical limit $(t, q) \to 0$ of $R_{\PP^1}$ is given by
    \eqref{eq:limit}.
  \end{enumerate}
\end{proposition}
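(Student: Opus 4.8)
The plan is to prove uniqueness by an order-by-order argument in the formal variable $z$, using that existence is already guaranteed by $R_{\PP^1}$ itself. Suppose $(\hat R, \hat S, \hat u)$ is any triple satisfying the four conditions; I will show $\hat R = R_{\PP^1}$. The first step is to eliminate $\hat S$ in favour of $\hat R$. Condition (1) reads $\hat S(z) = \hat R(z)\, e^{\diag(\hat u_0, \dotsc, \hat u_m)/z}$, so substituting this into the quantum differential equation (2) and multiplying on the right by $e^{-\diag(\hat u_0, \dotsc, \hat u_m)/z}$ converts (2) into a single relation between $\hat R$, its $t$-derivatives and the diagonal matrix $\hat U = \diag(\hat u_0, \dotsc, \hat u_m)$, namely $z\,\partial_{t_\mu}\hat R + \hat R\,\partial_{t_\mu}\hat U = (\phi_\mu \star)\hat R$, in which $z$ now appears only with nonnegative powers. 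The same substitution applied to the $q$-dependence equation (3) produces an analogous relation involving $q\,\partial_q \hat R$.

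Writing these relations in the basis of normalized idempotents, which diagonalizes the quantum product but moves with $(t,q)$ so that differentiation contributes the usual rotation coefficients, I compare coefficients of $z^k$. The $z^0$ term forces $\partial_{t_\mu}\hat U$ to be the diagonal of quantum multiplication, so the $\hat u_i$ are the canonical coordinates of the Frobenius structure; this determines their $t$-dependence, and the $q$-relation coming from (3) determines their $q$-dependence, fixing each $\hat u_i$ up to an additive constant. At order $z^k$ for $k \ge 1$ the relation expresses a linear combination of the entries of $\hat R_k$, built from the differences $\partial_{t_\mu}\hat u_i - \partial_{t_\mu}\hat u_j$ and the rotation coefficients, in terms of $\partial_{t_\mu}\hat R_{k-1}$, which is already known. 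The off-diagonal entries of $\hat R_k$ are then obtained algebraically by dividing by the differences $\hat u_i - \hat u_j$ (this is exactly where semisimplicity, i.e.\ distinctness of the $u_i$, is used), while the diagonal entries solve a first-order linear equation in the $t_\mu$ and are thus determined up to a function of $q$ alone.

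It remains to fix the remaining constants, and this is the one place where all of conditions (2)--(4) are needed together. Condition (2) propagates the data in the $t$-directions and (3) propagates it in the $q$-direction, so that the value of $\hat R$ on all of parameter space is determined by its value at the single point $(t,q)=0$; condition (4) supplies that value, where the $u_i$ and $R_{\text{Mumford}}$ are explicit. In particular (4) forces the leading term $\hat R_0$ to equal $\Id$. Running the induction, every coefficient $\hat R_k$ is forced to coincide with that of $R_{\PP^1}$, so $\hat R = R_{\PP^1}$. Equivalently, the difference of any two solutions satisfies the homogeneous versions of these recursions with vanishing classical limit, and therefore vanishes identically.

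I expect the main obstacle to be the bookkeeping at order $z^k$: one must verify that the off-diagonal operator (division by $\hat u_i - \hat u_j$) is invertible, which requires the canonical coordinates to be pairwise distinct — valid only after inverting the equivariant parameters, exactly as in the localization setup of this appendix — and that the diagonal integration constants are left with no residual freedom once the $q$-flow from (3) and the initial value from (4) are imposed. The consistency (integrability) of the combined $t$- and $q$-flows, which could otherwise obstruct the argument, is automatic here because the genuine solution $R_{\PP^1}$ exists, so the content is purely the uniqueness direction.
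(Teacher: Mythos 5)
Your proposal is correct and follows essentially the same route as the paper: properties (1)--(2) determine $R_{\PP^1}$ up to functions constant in the $t_i$, property (3) then fixes the $q$-dependence, and the classical limit (4) pins down the remaining constants. The only difference is that the paper delegates the first step to Givental's general analysis of the quantum differential equation \cite[Proposition~6.7]{Gi01b}, whereas you carry out that order-by-order recursion explicitly (note only the small slip that the off-diagonal entries are divided by the eigenvalue differences $\partial_{t_\mu}\hat u_i - \partial_{t_\mu}\hat u_j$, as you correctly state earlier, rather than by $\hat u_i - \hat u_j$ themselves).
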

\begin{proof}
  In this Section we have checked that $R_{\PP^1}$ indeed satisfies
  these properties.

  By a general analysis \cite[Proposition~6.7]{Gi01b} of the quantum
  differential equation we see that the first two properties determine
  $R_{\PP^1}$ up to functions constant in the $t_i$.
  Via \eqref{eq:SvR2}, equation \eqref{eq:qdep} also determines the
  dependence of $R_{\PP^1}$ on $q$.
  Hence $R_{\PP^1}$ is uniquely determined from its classical limit.
\end{proof}

\subsection{Small equivariant mirror}
\label{sec:mirror}

Following \cite{Gi01b}, we give a concrete description of the
$R$-matrix for the equivariant Gromov--Witten theory of $\PP^m$.
In contrast to the previous sections of the appendix, we will soon
restrict ourselves to the small quantum cohomology case $t = 0$.
To simplify the notation, we will leave out the subscript $\PP^m$ in
$R_{\PP^m}$ and $S_{\PP^m}$.

If we use alternative coordinates
\begin{equation*}
  t = \widetilde t_0 + \widetilde t_1 H + \dotsb + \widetilde t_m H^m,
\end{equation*}
the quantum differential equation implies
\begin{equation}
  \label{eq:QDE2}
  z \frac\partial{\partial\widetilde t_1} S_i(z) = H \star S_i(z) = DS_i(z) + \lambda_i S_i(z).
\end{equation}
So we can rewrite the differential equation \eqref{eq:QDE2} satisfied
by $S_i$ to
\begin{equation}
  \label{eq:QDE3}
  \left(zq\frac\partial{\partial q} + \lambda_i \right)S_i(z) = H \star S_i(z).
\end{equation}
Now it makes sense to restrict to $t = 0$ and we will do so from now
on (without changing the notation).

If we write $S_{\mu i} = \eta(S_i, \phi_\mu)$, we can rewrite
\eqref{eq:QDE3} to
\begin{equation}
  \label{eq:QDE4}
  D(S_{\mu i} e^{\ln(q) \lambda_i/z})
  = S_{(\mu + 1)i} e^{\ln(q) \lambda_i/z}
\end{equation}
for $\mu < m$ and
\begin{equation}
  \label{eq:QDE5}
  \prod_{j = 0}^m (D - \lambda_j) (S_{0i} e^{\ln(q) \lambda_i/z})
  = qS_{0i} e^{\ln(q) \lambda_i/z}.
\end{equation}
\begin{proposition}
  The series $R$ is the unique matrix-valued power series in $z$
  depending on parameters $\lambda_0, \dotsc, \lambda_m$, $q$ such
  that there exist functions $u_0, \dotsc, u_m$ independent of $z$ and
  a matrix-valued power series $S$ in $z^{-1}$ such that
  \begin{enumerate}
  \item the series $R$ and $S$ are related by \eqref{eq:SvR2},
  \item $S$ satisfies the quantum differential equations
    \eqref{eq:QDE4} and \eqref{eq:QDE5},
  \item the classical limit $q \to 0$ of $R$ is given by
    \eqref{eq:limit}.
  \end{enumerate}
\end{proposition}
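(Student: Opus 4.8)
The plan is to mirror the structure of the preceding proposition (the one with four conditions and parameters $t_0,\dots,t_m$) but now in the specialized setting $t=0$, where the quantum differential equation collapses to the single scalar ODE \eqref{eq:QDE5}. First I would verify that $R$ does satisfy the three listed properties; since these are just the $t=0$ restrictions of the properties already established in the previous proposition, the verification is essentially immediate from the work of Section~\ref{sec:loctofrob} together with \eqref{eq:QDE4} and \eqref{eq:QDE5}. The substantive content is the uniqueness claim, and the strategy for that is to argue that the listed data pin down $S$ (hence $R$ via \eqref{eq:SvR2}) completely.

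For uniqueness, I would proceed in two stages. First, condition~(2) constrains the fundamental solution: the system \eqref{eq:QDE4} expresses each component $S_{(\mu+1)i}$ in terms of $D$ applied to $S_{\mu i}$, so the entire matrix $S$ is determined by its first row $S_{0i}$, and \eqref{eq:QDE5} says each $S_{0i}e^{\ln(q)\lambda_i/z}$ solves the same scalar order-$(m+1)$ differential operator $\prod_{j}(D-\lambda_j) - q$. Thus the columns $S_i$ lie in the $(m+1)$-dimensional solution space of one fixed ODE, and the freedom remaining is the choice of basis of solutions. Second, I would use the relation \eqref{eq:SvR2}, $R(z)=S(z)e^{-\diag(u)/z}$, together with the requirement that $R$ be a power series in $z$ (not merely in $z^{-1}$) and with the asymptotic behavior encoded by the $u_i$: the exponents $u_i$ are forced to be the critical values, and matching the allowed asymptotic expansion of each solution as $q\to 0$ against the prescribed classical limit~\eqref{eq:limit} of $R$ fixes the normalization of each basis solution uniquely.

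The natural way to package this rigorously is to invoke the same general analysis of the quantum differential equation, \cite[Proposition~6.7]{Gi01b}, that was used in the previous proof: it guarantees that the symplectic solution $S$ of the QDE with a prescribed $z\to 0$ (equivalently $q\to 0$) asymptotic form is unique. Here, because $t$ has been set to zero, the QDE is the single equation \eqref{eq:QDE5} and the parameter dependence is only through $q$ and the $\lambda_j$; so the argument is strictly simpler than before, and condition~(3) supplies exactly the boundary data that \cite[Proposition~6.7]{Gi01b} requires.

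The step I expect to be the main obstacle is justifying that conditions~(1)--(2) really do determine $R$ only up to a $q$-independent (constant) ambiguity, so that condition~(3) alone suffices to remove it. In the previous proposition this was handled by the interplay of the full QDE \eqref{eq:QDE} with the $q$-dependence equation \eqref{eq:qdep}; now that $t=0$, I must check that no information is lost in discarding the $t_\mu$-derivatives, i.e.\ that \eqref{eq:QDE5} together with \eqref{eq:SvR2} genuinely recovers the same rigidity. I would argue this by noting that \eqref{eq:QDE3} is exactly the $t=0$ specialization of \eqref{eq:qdep}, and that the operator $\prod_j(D-\lambda_j)-q$ has regular singular behavior at $q=0$ with exponents $\lambda_0,\dots,\lambda_m$, so its solution space is spanned by functions of the form $q^{\lambda_i/z}(1+O(q))$; the leading coefficients are then fixed by \eqref{eq:limit}, closing the uniqueness argument.
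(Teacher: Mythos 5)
Your proposal is correct and follows essentially the same route as the paper: \eqref{eq:QDE4} reduces everything to the first row $S_{0i}$, the scalar operator $\prod_j(D - \lambda_j) - q$ determines $S_{0i}$ up to integration constants (your Frobenius-exponent description of the solutions $q^{\lambda_i/z}(1 + O(q))$ is just a more explicit version of this), and the classical limit \eqref{eq:limit} fixes those constants, determining $R$ via \eqref{eq:SvR2}. Your suggested appeal to \cite[Proposition~6.7]{Gi01b} is unnecessary here --- the paper uses it only for the preceding proposition with $t$-dependence, while for this $t = 0$ statement it argues directly exactly as in your final paragraph.
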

\begin{proof}
  Equation \eqref{eq:QDE4} determines all of $S$ from $S_{0i}$, which
  is determined from \eqref{eq:QDE5} up to additive integration
  constants depending on $\lambda_0, \dotsc, \lambda_m$.
  Choosing the integration constants such that the classical limit
  condition holds determines $S$ up to constants (not depending on
  $\lambda_0, \dotsc, \lambda_m$), which is enough to determine $R$
  uniquely via \eqref{eq:SvR2}.
\end{proof}

Givental constructs asymptotic solutions $S_{0i}$ to \eqref{eq:QDE5}
via oscillating integrals on the mirror manifold
\begin{equation*}
  \{(T_0, \dotsc, T_m): e^{T_0}\cdot\dots\cdot e^{T_m} = q\} \subset \CC^{m + 1}
\end{equation*}
with superpotential
\begin{equation*}
  F(T) = \sum_{j = 0}^m (e^{T_j} + \lambda_jT_j).
\end{equation*}
The integrals are given by
\begin{equation*}
  S_{0i} e^{\ln(q) \lambda_i/z}
  = (-2\pi z)^{-m/2} \int\limits_{\Gamma_i \subset \{\sum T_j = \ln q\}} e^{F(T)/z} \omega
\end{equation*}
along $m$-cycles $\Gamma_i$ through a specific critical point of the
superpotential $F$ constructed using the Morse theory of the real part
of $F/z$.
The form $\omega$ is the restriction of
$\mathrm dT_0 \wedge \dotsb \wedge \mathrm dT_m$.
To see that the integrals are actual solutions, notice that applying
$D - \lambda_j$ to the integral has the same effect as multiplying the
integrand by $e^{T_j}$.

There are $m + 1$ critical points at which it is possible to do a
stationary phase expansion of $S_{0i}$.
Let us write $P_i$ for the solution to
\begin{equation*}
  \prod_{i = 0}^m (X - \lambda_i) = q
\end{equation*}
with limit $\lambda_i$ as $q \to 0$.
For each $i$, we need to choose the critical point
$e^{T_j} = P_i - \lambda_j$ in order for the factor\footnote{The
  function $u_i$ of this appendix does not completely agree with the
  function $u_i$ of Section~\ref{sec:SD}.
  They differ by the constant
  $c = \sum_{j \neq i} (\lambda_i - \lambda_j)(-1 + \ln(\lambda_i -
  \lambda_j))$.
  Correspondingly, the $S_i$ defined via the oscillating integral also
  coincides with the series from localization only up to a factor
  $e^{c/z}$.
  These differences are irrelevant for the computation of the
  $R$-matrix.}
\begin{equation*}
  \exp(u_i / z) := \exp\left(\left(\sum_{j = 0}^m (P_i - \lambda_j + \lambda_j \ln(P_i - \lambda_j)) - \lambda_i\ln(q)\right)/z\right)
\end{equation*}
of $S_{0i}$ to be well-defined in the limit as $q \to 0$.
Shifting the integral to the critical point and scaling coordinates by
$\sqrt{-z}$, we find
\begin{equation}
  \label{eq:Sasymp}
  S_{0i} = e^{u_i/z} \int \exp\left(-\sum_j (P_i - \lambda_j) \sum_{k = 3}^\infty \frac{T_j^k (-z)^{(k - 2)/2}}{k!}\right) \mathrm d\mu_i
\end{equation}
for the conditional Gaussian distribution
\begin{equation*}
  \mathrm d\mu_i = (2\pi)^{-m/2} \exp\left(-\sum_j (P_i - \lambda_j) \frac{T_j^2}2\right) \omega.
\end{equation*}
For the asymptotic expansion, we formally expand the exponential in
\eqref{eq:Sasymp} and integrate over the real part of the image of the
mirror under the transformations we have performed.
The integrals are moments of $\mu_i$ which can be computed using the
covariance matrix
\begin{equation*}
  \sigma_i(T_k, T_l)
  = \frac 1{\Delta_i}
  \begin{cases}
    -\prod_{j \notin \{k, l\}} (P_i - \lambda_j), & \text{for }k \neq l, \\
    \sum_{m \neq k} \prod_{j \notin \{k, m\}} (P_i - \lambda_j), & \text{for }k = l.
  \end{cases}
\end{equation*}
Since odd moments of Gaussian distributions, vanish we find that the
asymptotic expansion of $e^{-u_i/z} S_{0i}$ is a power series in $z$.
So, using \eqref{eq:SvR} as a definition, by \eqref{eq:QDE4} the
entries of the $R$-matrix in the basis of normalized idempotents are
given by similar asymptotic expansion of
\begin{equation*}
  \Delta_k^{-1/2} \prod_{j \neq k}(D + \lambda_i - P_j) \ (e^{-u_i/z}S_{0i}).
\end{equation*}

We need to check that the $R$-matrix given in terms of asymptotics of
oscillating integrals behaves correctly in the limit as $q \to 0$.
By definition, in this limit, $P_i \to \lambda_i$.
By symmetry, it is enough to consider the zeroth column.
Set $x_i = e^{T_i}$.
Then
\begin{multline*}
  \lim\limits_{q \to 0} R_{j0}
  \asymp \lim\limits_{q \to 0} e^{-u_0/z} \Delta_0^{-1/2} \prod_{k \neq j}(zq\frac{\mathrm d}{\mathrm dq} + \lambda_j - \lambda_k) \ S_{00} \\
  = \lim\limits_{q \to 0} \frac{e^{-u_0/z}}{\sqrt{\Delta_0} (-2\pi z)^{m/2}} \int e^{(\sum_k (e^{T_k} - (\lambda_0 - \lambda_k) T_k)) / z + \sum_{k \neq j} T_k} \omega \\
  = \lim\limits_{q \to 0} \frac{e^{-u_0/z}}{\sqrt{\Delta_0}(-2\pi z)^{m/2}} \int e^{(\sum\limits_{k \neq 0} (x_k - (\lambda_0 - \lambda_k) T_k) + \frac q{\prod\limits_{k \neq 0} x_k}) / z} \prod_{k \neq j} x_j \bigwedge_{k = 1}^m \mathrm dT_k.
\end{multline*}
In the last step, we moved to the chart
\begin{equation*}
  x_0 = \frac q{\prod_{j \neq 0} x_j}.
\end{equation*}
Since in this chart $\lim_{q \to 0} x_0 = 0$, we have that $R_{j0}$
vanishes unless $j = 0$.
On the other hand, in the limit as $q \to 0$, the integral for
$R_{00}$ splits into one-dimensional integrals
\begin{equation*}
  \lim\limits_{q \to 0} R_{00}
  \asymp \lim\limits_{q \to 0} \frac{e^{-u_0/z}}{\sqrt{\Delta_0}(-2\pi z)^{m/2}} \prod_{k \neq 0} \int\limits_0^\infty e^{(x - (\lambda_0 - \lambda_k) \ln(x)) / z} \mathrm dx.
\end{equation*}
Let us temporarily set $z_k = -z/(\lambda_0 - \lambda_k)$.
The prefactors also split into pieces in the limit and we calculate
the factor corresponding to $k$ to be
\begin{multline*}
  \hspace{-1em}\frac{e^{(1 - \ln(\lambda_0 - \lambda_k)) / z_k}}{\sqrt{-2\pi z(\lambda_0 - \lambda_k)}} \int\limits_0^\infty e^{(x - (\lambda_0 - \lambda_k) \ln(x))/z} \mathrm dx
  = \frac{e^{(1 - \ln(1 / z_k)) / z_k}}{\sqrt{2\pi / z_k }} \Gamma\left(1 + \frac 1{z_k}\right)\\
  = \frac{e^{(1 - \ln(1 / z_k)) / z_k}}{\sqrt{2\pi z_k}} \Gamma\left(\frac 1{z_k}\right)
  \asymp \exp\left(\sum_{l = 1}^\infty \frac{B_{2l}}{2l(2l - 1)} \left(\frac z{\lambda_k - \lambda_0}\right)^{2l - 1} \right),
\end{multline*}
using Stirling's approximation of the gamma function in the last step.
So the product of the factors gives the expected limit
\eqref{eq:limit} of $R_{00}$ for $q \to 0$.

\bibliographystyle{utphys}
\bibliography{pixrel}
\addcontentsline{toc}{section}{References}

\vspace{+8 pt}
\noindent
Departement Mathematik \\
ETH Zürich \\
felix.janda@math.ethz.ch

\vspace{+8 pt}
\noindent
Institut de Mathématiques de Jussieu \\
Paris \\
felix.janda@imj-prg.fr

\end{document}